\documentclass[12pt,leqno,a4paper,article]{amsart}
\usepackage{amsmath, amsthm, verbatim, amsfonts, amssymb, color}
\usepackage{mathrsfs}
\usepackage{dsfont}
\usepackage[a4paper,margin=3cm]{geometry}

\usepackage{graphicx}
 \usepackage{subfigure} 
 
\usepackage{booktabs}
\usepackage{array}
\usepackage{multirow,multicol}
\usepackage{diagbox}
\usepackage[figuresright]{rotating}

\usepackage{CJKutf8}

\usepackage{algorithm,algorithmic}
\usepackage{multirow}
\usepackage{float}

\usepackage{bm}
\usepackage{bbm}

\usepackage{mathtools}
\usepackage{mathrsfs}
\usepackage{bm}
\usepackage{marginnote}
\usepackage{extarrows}
\usepackage{xcolor}

\usepackage{enumitem}
\usepackage{hyperref}
\hypersetup{hypertex=true,
	colorlinks=true,
	linkcolor=blue,
	anchorcolor=blue,
	citecolor=green,
	pdftitle={Overleaf Example},
	pdfpagemode=FullScreen}
\makeatletter
\newcommand{\runum}[1]{\mathrm{\romannumeral #1}}
\newcommand{\Rmnum}[1]{\mathrm{\expandafter\@slowromancap\romannumeral #1@}}
\makeatother


\newcommand\dd{\,\mathrm{d}}

\newcommand{\abs}[1]{\left|#1\right|}

\newcommand{\lipnorm}[1]{\left\Vert #1 \right\Vert_{\operatorname{Lip}}}

\newcommand{\hsprod}[1]{\left\langle #1 \right\rangle_{\mathrm{HS}}}
\newcommand{\hsnorm}[1]{\left\Vert #1 \right\Vert_{\operatorname{HS}}}
\newcommand{\inprod}[1]{\left\langle #1 \right\rangle}


\newcommand{\bbR}{\mathbb{R}}
\newcommand{\bbE}{\mathbb{E}}



\newcommand{\rme}{\mathrm{e}}


\newcommand{\pheq}{\mathrel{\phantom{=}}}

\theoremstyle{plain}
\newtheorem{theorem}{Theorem}[section]

\newtheorem{lemma}[theorem]{Lemma}

\theoremstyle{definition}

\numberwithin{equation}{section}
\renewcommand\labelenumi{\textup{\alph{enumi})}}
\renewcommand\theenumi\labelenumi

\usepackage{url}

\usepackage{marginnote}

\marginparwidth50pt 


\title[Phase transition in EM scheme of an SDE driven by stable noises]{Phase transition in 
the EM scheme of an SDE driven by $\alpha$-stable noises with $\alpha \in (0,2]$}

\begin{document}
\begin{CJK}{UTF8}{gbsn}
	
	\allowdisplaybreaks[4]
	
	\date{\today}
	
	\author[Y. Wang]{Yu Wang}
	\address{Yu Wang: 1. Department of Mathematics,
		Faculty of Science and Technology,
		University of Macau,
		Av. Padre Tom\'{a}s Pereira, Taipa
		Macau, China; \ \ 2. UM Zhuhai Research Institute, Zhuhai, China.}
	\email{yc17447@um.edu.mo}
	
	\author[Y. Xiao]{Yimin Xiao}
	\address{Yimin Xiao: Department of Statistics and Probability, 
		A-413 Wells Hall, 
		Michigan State University, 
		East Lansing, MI 48824, 
		USA.}
	\email{xiao@stt.msu.edu}
	
	\author[L. Xu]{Lihu Xu}
	\address{Lihu Xu: 1. Department of Mathematics,
		Faculty of Science and Technology,
		University of Macau,
		Av. Padre Tom\'{a}s Pereira, Taipa
		Macau, China; \ \ 2. UM Zhuhai Research Institute, Zhuhai, China.}
	\email{lihuxu@um.edu.mo}
	
	\keywords{Euler-Maruyama scheme; Phase transition; Stable processes, Stochastic Differential Equations (SDEs); Divergence}

	\begin{abstract}
		We study in this paper the EM scheme for a family of well-posed critical SDEs with the drift $-x\log(1+\abs{x})$ and $\alpha$-stable noises. 
		Specifically, we find that when the SDE is driven by a rotationally symmetric $\alpha$-stable processes with $\alpha=2$ (i.e. Brownian motion), the EM scheme is bounded in the $L^2$ sense uniformly w.r.t. the time. 
		In contrast, if the SDE is driven by a rotationally symmetric $\alpha$-stable process with $\alpha \in (0,2)$, all the $\beta$-th moments, 
		with $\beta \in (0,\alpha)$, of the EM scheme blow up. This demonstrates a phase transition phenomenon as $\alpha \uparrow 2$. We verify our results by simulations. 
	\end{abstract}
	
	\maketitle
	
	\tableofcontents
	
	\noindent


\section{Introduction}
The following stochastic differential equation (SDE) on $\mathbb{R}^d$ has been extensively studied for several decades:
	\begin{equation}
		\label{eq:initial SDE}
		\dd X_t = f(X_t) \dd t + g(X_t) \dd L_t\ , \quad X_0 = x_0,
	\end{equation}
where $x_0 \in \bbR^d$, $f:\bbR^d \to \bbR^d , g: \bbR^d \to \bbR^{d\times d}$ satisfy certain regularity conditions, and the process 
$(L_t, t\geqslant 0)$ is a $d$-dimensional, rotationally invariant $\alpha$-stable L\'evy process with $\alpha \in (0,2]$. We refer to the books
\cite{MR2512800,MR1083357,MR2160585} for systematic accounts on SDEs driven by L\'evy processes and to \cite{arapostathis2019uniform, 
MR3390235, bao2022coupling,baoyuan2017,bao2011comparison, bao2012stochastic, chen2016heat,dang2024eulermaruyama,
dong2020irreducibility,MR2030412,KUHN20192654, WOS:000343456100011, deng2023wasserstein,wang2012exponential} for more 
recent developments.  

The Euler-Maruyama (EM) scheme of  SDE (\ref{eq:initial SDE}) has also been studied by many authors,
see \cite{kloeden1992stochastic,KUHN20192654,higham2002strong, yuan2004convergence,WOS:001028737600001,yuan2008note, chen2023approximation,WOS:000465541700003,MR2972586,bao2019convergence,MR3660882}. 
In particular, when $f$ is Lipschitz and $g$ is bounded Lipschitz, by a standard method for proving the existence and uniqueness 
of SDE's strong solution, it can be shown that the EM scheme in a finite time interval $[0,T]$ strongly converges to \eqref{eq:initial SDE}, 
see for instance \cite{MR1121940,MR2001996,MR2512800,MR2160585,MR1083357}.
	
{It is well known that, even though a (stochastic) dynamics system 
is well posed, its numerical schemes may blow up, }see \cite{kloeden1992stochastic,KUHN20192654, MR2795791,higham2003exponential,bao2012stochastic}. 
In particular, for SDE \eqref{eq:initial SDE} with a 
Brownian motion noise, i.e., $\alpha=2$, if the drift coefficient $f$ is a polynomial like $-x |x|^\theta$ with $\theta>0$, 
it has a unique strong solution with finite exponential moment. In contrast, its EM scheme has a blowing-up $L^p(\mathbb P)$ 
norm with $p \geqslant 1$ as the step size $\eta \rightarrow 0$, see for instance \cite{MR2795791, 
higham2003exponential,hutzenthaler2013divergence}. When the driven noise is an $\alpha$-stable process 
with $\alpha \in (0,2)$,  due to the heavy tailed property of stable distribution, one may expect that the corresponding 
EM scheme will blow up. One of the main motivation of the present paper is to verify this conjecture. 

In this paper, we shall mainly consider SDE (\ref{eq:initial SDE}) with a special non-Lipschitz drift $-x\log(1+|x|)$, 
which lies at the boundary between the cases $-x |x|^\theta$ and $-x$. To the best of our knowledge, the behavior of the 
EM scheme has not been studied. Our main results, Theorems \ref{uniform bound of EM_BM}--\ref{thm 3}, show that the EM scheme is uniformly bounded 
for $\alpha=2$, but blows up for $\alpha \in (0,2)$. This demonstrates a phase transition as $\alpha \uparrow 2$.  

\subsection{The drift $-x \log(1+|x|)$ and $\alpha \in (0,2]$: phase transition}
In order to demonstrate our idea, we assume for simplicity that $g(x)=I_d$ with $I_d$ being $d \times d$ identity matrix. 
We expect that our results can be extended to the case in which $g$ is bounded Lipschitz and nondegenerate.

Let us consider the following SDE on $\mathbb{R}^d$: 
	\begin{equation}
		\label{eq:critical sde}
		\dd X_t = -X_t \log( 1 + \abs{ X_t } ) \dd t + \dd L_t\ , \quad X_0 = x_0, 
	\end{equation}
where $L_t$ is a $d$-dimensional rotationally symmetric $\alpha$-stable L\'evy process with $\alpha \in (0,2]$. 
Thanks to the dissipation of the drift term, we can show by a standard method that SDE \eqref{eq:critical sde} 
admits a unique strong solution, see Appendix \ref{appendix} below. 

The Euler-Maruyama (EM) scheme of SDE \eqref{eq:critical sde} is give by: for $k \in \mathbb Z_{+}$, 
	\begin{equation}
		\label{eq:critical EM}
		Y_{k+1} = Y_{k} - \eta Y_{k} \log(1+\abs{Y_k}) + (L_{(k+1)\eta} - L_{k\eta} ), \quad Y_0 = x_0,
	\end{equation} 
where $\eta>0$ is the step size. We shall show that for $\alpha \in (0,2)$, the above EM scheme $({ Y_n}, n \ge 1)$ 
blows up in $L^\beta(\mathbb P)$ for any $\beta \in (0, 2)$ as $\eta \rightarrow 0$; while for $\alpha=2$, i.e. 
$L_t$ is a standard $d$-dimensional Brownian motion, 
$({ Y_n}, n \ge 1)$  is uniformly bounded in $L^2(\mathbb P)$. This demonstrates a phase transition as $\alpha \uparrow 2$. 
 \\ \ 
	

We start with the following theorem for the case of $\alpha=2$.   
\begin{theorem}
\label{uniform bound of EM_BM}
	Consider the EM scheme \eqref{eq:critical EM} with $\alpha=2$, i.e. the driven noise is Brownian motion. 
	Then, for any fixed initial value $x_0$, there exist constants $\eta_0 \leqslant \min\{(1+\abs{x_0})^{-2}, \rme^{-5}\}$ and $C>0$ 
	such that for all $\eta \in (0, \eta_0]$, 
	\begin{equation}\label{Eq:Th1.1}
		\sup_{  m \geqslant 0 } \bbE  \abs{ Y_m }^2 \leqslant C.
	\end{equation}
\end{theorem}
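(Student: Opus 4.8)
The plan is to iterate a sharp one–step estimate for the conditional second moment and to reduce the whole problem to a scalar drift analysis plus a tail bound. Write $\xi_{k+1}=B_{(k+1)\eta}-B_{k\eta}$, which is $N(\zero,\eta I_d)$ and independent of $\mathcal F_k=\sigma(Y_0,\dots,Y_k)$. Expanding $\abs{Y_{k+1}}^2$ from \eqref{eq:critical EM}, the cross term has zero conditional mean and $\bbE[\abs{\xi_{k+1}}^2\mid\mathcal F_k]=d\eta$, so I would first record the exact identity
\begin{equation}\label{eq:cond2mom-plan}
	\bbE\big[\abs{Y_{k+1}}^2\mid\mathcal F_k\big]=\abs{Y_k}^2\big(1-\eta\log(1+\abs{Y_k})\big)^2+d\eta .
\end{equation}
Everything then reduces to understanding the scalar map $\phi(r)=r^2\big(1-\eta\log(1+r)\big)^2$ for $r=\abs{Y_k}\ge 0$.

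Next I would carry out the deterministic drift analysis of $\phi$. The crucial elementary fact is that $\big(1-\eta\log(1+r)\big)^2\le 1$ exactly when $0\le\eta\log(1+r)\le 2$, i.e. on the enormous range $0\le r\le \rme^{2/\eta}-1$; on that range $\phi(r)\le r^2$, so apart from the additive $d\eta$ the scheme cannot grow. Writing the conditional drift as
\begin{equation}\label{eq:drift-plan}
	\bbE\big[\abs{Y_{k+1}}^2-\abs{Y_k}^2\mid\mathcal F_k\big]=-\eta\,r^2\log(1+r)\big(2-\eta\log(1+r)\big)+d\eta ,
\end{equation}
I would fix a constant $r_0>0$ and set $c=\log(1+r_0)$. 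For $r\le \rme^{1/\eta}-1$ one has $2-\eta\log(1+r)\ge 1$, whence $\phi(r)\le(1-c\eta)r^2$ once $r\ge r_0$, while for $r<r_0$ one trivially has $\phi(r)\le r_0^2$. Together these give the envelope $\phi(r)+d\eta\le(1-c\eta)r^2+C\eta$ valid for all $r\le \rme^{1/\eta}-1$, with $C=C(d,r_0)$. (The hypotheses $\eta_0\le\min\{(1+\abs{x_0})^{-2},\rme^{-5}\}$ serve to place $x_0$ in this good range and to validate the elementary inequalities used here.)

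The main obstacle is the complementary set $\{\abs{Y_k}>\rme^{1/\eta}-1\}$, where the multiplicative contraction is lost (and on $\{\abs{Y_k}>\rme^{2/\eta}-1\}$ one even has $\phi(r)>r^2$, with $\phi(r)\lesssim\eta^2 r^2\log^2(1+r)$). To close the argument I must show that this (singly/doubly) exponentially large set is visited with probability so small that
$
	\bbE\big[\phi(\abs{Y_k})\,\mathbf 1_{\{\abs{Y_k}>\rme^{1/\eta}-1\}}\big]\le C\eta
$
uniformly in $k$. This is precisely where the Gaussianity of the increments is indispensable and where the phase transition originates: in the good range $\abs{Y_{k+1}}\le\abs{Y_k}+\abs{\xi_{k+1}}$, and the increments $\xi_{k+1}$ are sub-Gaussian at scale $\sqrt\eta$, so the huge excursion needed to reach these sets has super-exponentially small probability, whereas for $\alpha$-stable noise with $\alpha<2$ the polynomial tails make such excursions non-negligible. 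Concretely I would establish a uniform-in-$k$ control on the tail of $\abs{Y_k}$ — for instance a bound $\sup_k\bbE\abs{Y_k}^{p}\le C_p$ for $p$ large, obtained by running the same drift computation \eqref{eq:drift-plan} for a higher power $\abs{Y_k}^{p}$ restricted to the good range — and then estimate the exceptional contribution by Cauchy–Schwarz, using $\phi(r)\lesssim\eta^2 r^2\log^2(1+r)$ together with $\mathbb P(\abs{Y_k}>\rme^{1/\eta}-1)\le C_p\,\rme^{-p/\eta}$; the product is $\ll\eta$. I expect this tail step to be the genuine technical core of the proof.

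Finally I would assemble the pieces. Taking expectations in \eqref{eq:cond2mom-plan}, splitting according to the good range and its complement, and inserting the envelope from the second paragraph together with the exceptional-region bound from the third, I obtain a clean recursion for $u_k:=\bbE\abs{Y_k}^2$ of the form
\begin{equation}\label{eq:recursion-plan}
	u_{k+1}\le(1-c\eta)\,u_k+C'\eta ,
\end{equation}
with $c,C'>0$ independent of $k$ and $\eta$. Iterating \eqref{eq:recursion-plan} yields $u_k\le\max\{u_0,\,C'/c\}$ for every $k$, which is the desired uniform bound $\sup_{m\ge 0}\bbE\abs{Y_m}^2\le C$ and completes the proof.
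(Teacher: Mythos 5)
Your one-step identity, the drift analysis of $\phi(r)=r^2\big(1-\eta\log(1+r)\big)^2$ on the good range, and the final recursion \eqref{eq:recursion-plan} are all correct, and they reproduce the skeleton of the paper's argument. But there is a genuine gap at exactly the step you yourself flag as ``the genuine technical core'': the bound $\bbE\big[\phi(\abs{Y_k})\mathbbm{1}_{\{\abs{Y_k}>\rme^{1/\eta}-1\}}\big]\le C\eta$. Your proposed mechanism for it is circular. You want the tail estimate $\mathbb{P}(\abs{Y_k}>\rme^{1/\eta}-1)\le C_p\,\rme^{-p/\eta}$ to follow by Markov from a uniform moment bound $\sup_k\bbE\abs{Y_k}^{p}\le C_p$, and you propose to get that moment bound ``by running the same drift computation for a higher power restricted to the good range.'' Restricting the recursion to the good range only controls $\bbE\big[\abs{Y_{k+1}}^{p}\mathbbm{1}_{\{\abs{Y_k}\le\rme^{1/\eta}-1\}}\big]$; to close a recursion for the full moment $\bbE\abs{Y_{k+1}}^{p}$ you must again estimate the contribution of $\{\abs{Y_k}>\rme^{1/\eta}-1\}$, which is precisely the exceptional term you set out to bound --- and it is worse for $p>2$ than for $p=2$. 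The Cauchy--Schwarz step suffers from the same problem: $\bbE\big[\phi(\abs{Y_k})^2\big]$ involves \emph{unrestricted} moments of order larger than $4$, and on the exceptional set the scheme expands multiplicatively (once $\eta\log(1+\abs{Y_k})\geqslant 2$ the map is expanding, so $\abs{Y_k}$ can be astronomically large there), so finiteness of those moments, uniformly in $k$ and $\eta$, is essentially the statement being proven, not something you may assume.

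The missing idea --- the one the paper uses --- is to condition on the \emph{noise} rather than on the \emph{state}. The paper stratifies $\Omega$ by the value of $\sup_{1\leqslant m\leqslant n}\abs{N_m}$ (events $A_0$, $A_k$, $B_k$), proves by induction a \emph{deterministic} bound on $\sup_{1\leqslant m\leqslant n}\abs{Y_m}$ on each stratum --- crucially accepting that on the extreme strata $B_k$ this bound grows exponentially in $n$, like $(k+1)^{T/\eta}\rme^{1/(2\eta)}\eta^{-3/2}$ --- and then beats that growth with the Gaussian tail $\exp\big\{-\tfrac12\big(k\eta^{-1}\rme^{1/(2\eta)}\big)^2\big\}$ of the conditioning event, which is doubly exponentially small. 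This yields $\bbE\big[\sup_{1\leqslant m\leqslant n}\abs{Y_m}^{p}\mathbbm{1}_{A_0^c}\big]\le C_p$ directly, with no recursion on moments of $Y$ and hence no circularity; it is exactly the input your plan needs. Until you supply such an argument (or an equivalent excursion/stopping-time argument with state thresholds and a buffer zone, which would also work but which you have not sketched), the proposal is not a proof; with it, the rest of your assembly goes through as written.
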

	
Next we consider the case $\alpha \in (0,2)$, in which $(L_t,t\geqslant0)$ is a rotationally symmetric $\alpha$-stable process, 
which will be denoted by $(Z_t, t\geqslant0)$. We refer to \cite{ken1999levy} for comprehensive account on L\'evy 
processes, stable and more generally, infinitely divisible distributions. See also \cite{chen2016heat,liu2020gradient,kim2018heat} 
for more recent developments.

It is known (cf. e.g.,  \cite[Theorem 14.3]{ken1999levy} or \cite{WOS:000343456100011,KIM20142479}) 
that the L\'evy measure  $\nu$ of the process $(Z_t, t\geqslant 0)$ is
\[
\nu(\dd z) = \frac{C_{d,\alpha}}{\abs{z}^{d+\alpha}} \dd z,
\]
where constant $C_{d,\alpha}$ is given by
\begin{equation} \label{e:C_d}
C_{d,\alpha} = \alpha 2^{\alpha-1} \pi^{-\frac{d}2} \cdot \frac{\Gamma\big( ({d+\alpha})/{2} \big) }{ \Gamma\big( 1-{\alpha}/{2} \big)}.
\end{equation} 
Even though, for a general $\alpha \in (0, 2)$, the transition density function $p_{\alpha}(t,x)$ of the $\alpha$-stable processes $(Z_t, t\geqslant 0)$ 
does not have an explicit expression, many of its analytic or asymptotic properties have been known.  
In particular, it follows from \cite[Theorem 2.1]{blumenthal1960some}
that $p_{\alpha}(t,x)$ satisfies
 	\begin{equation}
	\label{ineq:heat kernel}
			K(d,\alpha)^{-1} \frac{t}{( t^{1/\alpha} + \abs{x} )^{d+\alpha} } \leqslant p_{\alpha}(t,x) 
			\leqslant K(d,\alpha) \frac{t}{( t^{1/\alpha} + \abs{x} )^{d+\alpha} }, 
			\quad x \in \bbR,
		\end{equation}
where  $K(d,\alpha)\geqslant 1$ is a constant depending on $d$ and $\alpha$.   

By the L\'evy-It\^o decomposition (cf. \cite[Chapter 4]{ken1999levy} or \cite[Chapter 2]{MR2512800}), there exists a Poisson 
random measure $P(\dd t, \dd z)$ such that
\[
\dd Z_t = \int_{\{\abs{z}\geqslant1\}} z P(\dd t, \dd z) + \int_{\{\abs{z}<1\}} z \widetilde{P}(\dd t, \dd z),
\]
where $\widetilde{P}(\dd t, \dd z) = P(\dd t, \dd z) - \dd t\ \nu(\dd z)$ is the compensated Poisson random measure. Due to the lack of  
explicit representation for the probability density of the $\alpha$-stable noise $Z_{(n+1)\eta} - Z_{n\eta}$, the numerical simulation becomes 
complicated and computationally expensive. Hence, one often does not use the scheme \eqref{eq:critical EM} directly in practice, see 
 \cite{nolan2020univariate,chambers1976method} for further discussions. Since the $\alpha$-stable 
distribution and the Pareto distribution with parameter $\alpha$ have the same tail behavior and the stable central limit theorem 
(see, e.g. \cite{chen2019multivariate,hall1981two}), we can replace the stable noise $Z_{(n+1)\eta} - Z_{n\eta}$ with i.i.d. random variables with
the  Pareto distribution. More precisely, for any fixed time $T>0$, the EM approximation for the SDE \eqref{eq:critical sde}, denoted by 
mappings $\widetilde{Y}_{k} : \Omega \to \mathbb{R}^d$, is given by   
		\begin{equation}
			\label{e:EM scheme}
			\widetilde{Y}_{k+1} =\widetilde{Y}_{k} - \eta \widetilde{Y}_{k} \log \left( 1+ \big| \widetilde{Y}_{k} \big| \right)
			+ \frac{1}{\sigma} \eta^{1/\alpha} \widetilde{Z}_{k+1} , \quad \widetilde{Y}_{0}:= x_0 ,
		\end{equation}
		for all $k\in \left\{0,1,\ldots,n-1 \right\}$, $n\in \mathbb{N}$, where $\sigma^{\alpha} = \alpha / (s_{d-1} C_{d,\alpha})$ with 
		$C_{d,\alpha}$ defined by \eqref{e:C_d}, $\{\widetilde{Z}_{k}, k=1,2,\dotsc\}$ is a sequence of i.i.d. Pareto-distributed random variables, 
		and the density function $p(z)$ of the Pareto distribution is given by
		\begin{equation}
			p(z) =\frac{\alpha}{s_{d-1}|z|^{\alpha+d}} \mathbbm{1}_{(1,\infty)} (|z|).
		\end{equation}
		Here, $s_{d-1} = 2\pi^{\frac{d}2} / \Gamma(\frac{d}{2})$ represents the surface area of the unit sphere $\mathbb{S}^{d-1} \subseteq \bbR^d$. 
		

The following theorem describes the limiting behavior of the EM schemes \eqref{eq:critical EM} and (\ref{e:EM scheme}) when $\alpha \in (0,2)$. 
In Theorems \ref{thm 2}  and \ref{thm 3}, 
 $\kappa_\alpha$ and $\delta_\alpha$ are the constants, depending on $\alpha$, given in Lemmas \ref{Lemma 2.1} and \ref{Lemma 2.2} below.

\begin{theorem} \label{thm 2} 
Let $\alpha \in (0,2)$, $\beta \in (0,\alpha)$, $T \in (0,\infty)$ be constants and let  $\eta=\frac{T}{n}$ be the step size. 
			
			$(\runum{1})$ For the EM scheme \eqref{eq:critical EM}, define $K_1 = 2(\rme^{\delta_\alpha/\beta}+2)/T$. We assume that 
			$T$ and $n$ are large enough such that
			\[
			\frac{T}n \leqslant 1,  \quad   K_1 < \frac{\delta_\alpha - \log \delta_\alpha}{\alpha-\beta},  
			\quad
			\rme^{n K_1 } \geqslant \abs{x_0}\big(1+\log(1+\abs{x_0}) \big).
			\]
			Then, there exist a constant $C>0$ and a sequence of non-empty events $\Omega_n \subseteq \Omega$, $n\in \mathbb{N}$ with
			\[
			\mathbb{P}(\Omega_n) \geqslant \frac{C\rme^{-\alpha n K_1}} { n \delta_\alpha^{n}   }
			\quad \text{and} \quad
			\abs{ Y_n(\omega) } \geqslant \exp\left\{ nK_1+(n-1)\frac{\delta_\alpha}{\beta} \right\}
			\]
			for all $\omega\in\Omega_n$ and all $n\in \mathbb{N}$ large enough. Consequently, we have 
			\[
			\lim_{n\to \infty} \bbE\abs{Y_n}^{\beta} = \infty.
			\]
			
			$(\runum{2})$ For the EM scheme \eqref{e:EM scheme}, define $K_2 = 2(\rme^{\kappa_\alpha/\beta}+2/\sigma)/T$. 
			We assume that $T$ and $n$ are large enough such that
			\[
			\frac{T}n \leqslant 1, \quad 	K_2 < \frac{\kappa_\alpha - \log \kappa_\alpha}{\alpha-\beta},  
			\quad
			\rme^{n K_2 } \geqslant \abs{x_0}\big(1+\log(1+\abs{x_0}) \big).
			\]
			Then, there exist a constant $C>0$ and a sequence of non-empty events $\widetilde{\Omega}_n \subseteq \Omega$, $n\in \mathbb{N}$ with 
			\[
			\mathbb{P}(\widetilde{\Omega}_n) \geqslant \frac{C\rme^{-\alpha n K_2}} { n \kappa_\alpha^{n}   }
			\quad \text{and} \quad
			\big|{ \widetilde{Y}_n(\omega) }\big| \geqslant \exp\left\{ nK_2+(n-1)\frac{\kappa_\alpha}{\beta} \right\}
			\]
			for all $\omega\in\widetilde{\Omega}_n$ and all $n\in \mathbb{N}$ large enough.  Consequently, we have 
			\[
			\lim_{n\to \infty} \bbE\big|{\widetilde{Y}_n}\big|^{\beta} = \infty.
			\]
		\end{theorem}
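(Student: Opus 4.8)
The plan is to exploit the \emph{numerical instability of the explicit Euler map} for this dissipative drift. Writing one step as $Y_{k+1}=Y_k\bigl(1-\eta\log(1+\abs{Y_k})\bigr)+\Delta_k$ with $\Delta_k=L_{(k+1)\eta}-L_{k\eta}$, the deterministic part is \emph{expanding} precisely when $\eta\log(1+\abs{Y_k})>2$, since then $\abs{Y_k(1-\eta\log(1+\abs{Y_k}))}=\abs{Y_k}\bigl(\eta\log(1+\abs{Y_k})-1\bigr)>\abs{Y_k}$. The heavy tail of the $\alpha$-stable increment can, with only polynomially small probability, push $\abs{Y_1}$ into this unstable regime, after which the scheme blows up geometrically. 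I would therefore construct $\Omega_n$ in two phases — one \emph{triggering} large jump followed by $n-1$ \emph{maintenance} steps — prove a deterministic exponential lower bound for $\abs{Y_n}$ on $\Omega_n$, bound $\mathbb{P}(\Omega_n)$ from below using independence of the increments, and finally combine the two through $\bbE\abs{Y_n}^\beta\ge \mathbb{P}(\Omega_n)\,\bigl(\inf_{\Omega_n}\abs{Y_n}\bigr)^\beta$.

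For the deterministic bound I would let the triggering event force $\abs{\Delta_0}\ge 2\rme^{nK_1}$; since $\abs{Y_1}\ge\abs{\Delta_0}-\abs{x_0}(1+\eta\log(1+\abs{x_0}))$ and the hypothesis $\rme^{nK_1}\ge\abs{x_0}(1+\log(1+\abs{x_0}))$ controls the initial term, this yields $\abs{Y_1}\ge\rme^{nK_1}$. At that magnitude $\eta\log(1+\abs{Y_1})\ge\eta nK_1=TK_1=2(\rme^{\delta_\alpha/\beta}+2)$, so the expanding factor exceeds $2\rme^{\delta_\alpha/\beta}+3$. The maintenance event at step $k$ is simply $\{\abs{\Delta_k}\le\tfrac12\abs{Y_k}(\eta\log(1+\abs{Y_k})-1)\}$; on it the reverse triangle inequality gives $\abs{Y_{k+1}}\ge\tfrac12\abs{Y_k}(\eta\log(1+\abs{Y_k})-1)\ge\rme^{\delta_\alpha/\beta}\abs{Y_k}$, and since $\abs{Y_k}$ only grows the unstable regime \emph{persists}. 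Induction then yields $\abs{Y_n}\ge\rme^{nK_1}(\rme^{\delta_\alpha/\beta})^{n-1}=\exp\{nK_1+(n-1)\delta_\alpha/\beta\}$ on $\Omega_n$, exactly the claimed bound; the peculiar shape $K_1=2(\rme^{\delta_\alpha/\beta}+2)/T$ is calibrated so that halving the expanding factor still leaves at least $\rme^{\delta_\alpha/\beta}$.

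For the probability I would use independence of $\{\Delta_k\}$ to factorize $\mathbb{P}(\Omega_n)\ge\mathbb{P}(\text{trigger})\prod_{k=1}^{n-1}\mathbb{P}(\text{maintenance}_k)$. The triggering probability is a single heavy-tail event: by the heat-kernel lower bound \eqref{ineq:heat kernel} (encoded in the constant $\delta_\alpha$ of Lemma \ref{Lemma 2.1}), $\mathbb{P}(\abs{\Delta_0}\ge 2\rme^{nK_1})\gtrsim \eta\,\rme^{-\alpha nK_1}=\tfrac{T}{n}\rme^{-\alpha nK_1}$, furnishing the factor $\rme^{-\alpha nK_1}/n$. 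Each maintenance threshold is enormous, so $\mathbb{P}(\text{maintenance}_k)\ge 1-C\eta\rme^{-\alpha nK_1}\ge \delta_\alpha^{-1}$, contributing the crude but sufficient factor $\delta_\alpha^{-(n-1)}$; together these give $\mathbb{P}(\Omega_n)\ge C\rme^{-\alpha nK_1}/(n\delta_\alpha^{n})$. Part $(\runum{2})$ is entirely parallel: the stable increment is replaced by the scaled Pareto increment $\sigma^{-1}\eta^{1/\alpha}\widetilde Z_{k+1}$, whose explicit density makes both tail estimates elementary, $\delta_\alpha$ is replaced by $\kappa_\alpha$ from Lemma \ref{Lemma 2.2}, and the scale $\sigma$ enters the maintenance bound, explaining the shape $K_2=2(\rme^{\kappa_\alpha/\beta}+2/\sigma)/T$.

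Finally, combining the two estimates gives $\bbE\abs{Y_n}^\beta\ge C\rme^{-\alpha nK_1}(n\delta_\alpha^{n})^{-1}\exp\{\beta nK_1+(n-1)\delta_\alpha\}$, whose exponent equals $n\bigl[\delta_\alpha-\log\delta_\alpha-(\alpha-\beta)K_1\bigr]-\delta_\alpha-\log n+\log C$; the standing hypothesis $K_1<(\delta_\alpha-\log\delta_\alpha)/(\alpha-\beta)$ makes the bracket strictly positive, so $\bbE\abs{Y_n}^\beta\to\infty$. The main obstacle is the joint calibration in the second paragraph: one must choose the triggering magnitude and the step growth factor so that (a) the post-jump expansion survives all $n-1$ remaining steps despite the added noise, and (b) the resulting probability is only \emph{singly} exponentially small in $n$ — any attempt to grow the chain multiplicatively at small values, rather than via a single trigger followed by drift-driven expansion, forces a doubly exponentially small probability and the argument collapses. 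Arranging for $\delta_\alpha$ to simultaneously govern the growth rate $\rme^{\delta_\alpha/\beta}$ and the tail cost $\log\delta_\alpha$, with $\delta_\alpha>\log\delta_\alpha$ guaranteeing the net gain, is the crux.
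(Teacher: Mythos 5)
Your proposal follows the same overall strategy as the paper's proof: a single heavy-tailed ``trigger'' jump of order $\rme^{nK_1}$ at the first step (costing probability $\asymp \eta\,\rme^{-\alpha nK_1}$), followed by $n-1$ steps in which the overshooting explicit-Euler drift multiplies $\abs{Y_k}$ by at least $\rme^{\delta_\alpha/\beta}$, the same induction yielding $\abs{Y_n}\geqslant \exp\{nK_1+(n-1)\delta_\alpha/\beta\}$, and the same final computation in which $K_1<(\delta_\alpha-\log\delta_\alpha)/(\alpha-\beta)$ forces divergence. The one substantive difference is the choice of maintenance events. The paper confines each later noise increment to the \emph{fixed} band $[1+\eta,2+2\eta]$; those events are independent by construction, each has probability at least $1/\kappa_\alpha$ (Pareto, Lemma \ref{Lemma 2.1}) resp.\ $1/\delta_\alpha$ (stable, Lemma \ref{Lemma 2.2}), and this is precisely where the factors $\kappa_\alpha^{-n}$, $\delta_\alpha^{-n}$ in the stated probability bound come from. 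You instead cap the noise below half of the drift expansion term; since that threshold is of order $\rme^{nK_1}$, each step survives with probability $1-O(\eta\,\rme^{-\alpha nK_1})$, so your route actually yields the \emph{stronger} bound $\mathbb{P}(\Omega_n)\gtrsim \rme^{-\alpha nK_1}/n$, with no exponentially small factor at all; this implies the theorem's bound since $\delta_\alpha\geqslant 1$.

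However, one step of your probability argument is wrong as stated and needs repair. Your maintenance event $\{\abs{\Delta_k}\leqslant\tfrac12\abs{Y_k}(\eta\log(1+\abs{Y_k})-1)\}$ involves $Y_k$, which is a function of $\Delta_0,\dotsc,\Delta_{k-1}$, so these events are \emph{not} independent, and the factorization $\mathbb{P}(\Omega_n)\geqslant \mathbb{P}(\mathrm{trigger})\prod_k\mathbb{P}(\mathrm{maintenance}_k)$ does not follow from independence of the increments. Worse, the unconditional probability $\mathbb{P}(\mathrm{maintenance}_k)$ admits no useful lower bound: off the trigger event, $\abs{Y_k}$ may be small, the threshold $\tfrac12\abs{Y_k}(\eta\log(1+\abs{Y_k})-1)$ negative, and the event empty. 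Two routine fixes exist: (a) peel off the indicators one at a time by conditioning on $\mathcal{F}_k=\sigma(\Delta_0,\dotsc,\Delta_{k-1})$, noting that on the intersection of the trigger and the earlier maintenance events one has $\abs{Y_k}\geqslant M_k:=\exp\{nK_1+(k-1)\delta_\alpha/\beta\}$, hence a deterministic lower bound on the threshold; or (b) define the maintenance events with the deterministic thresholds $\tfrac12 M_k(\eta\log(1+M_k)-1)$ from the outset, so that genuine independence holds and monotonicity of $x\mapsto x(\eta\log(1+x)-1)$ lets the induction go through. The paper's fixed-band events sidestep this issue entirely, at the price of the extra $\kappa_\alpha^{-n}$. (A cosmetic slip: you swapped the lemma attributions --- $\kappa_\alpha$ is the Pareto constant of Lemma \ref{Lemma 2.1}, while $\delta_\alpha$ is the stable constant of Lemma \ref{Lemma 2.2}.)
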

		
\subsection{The polynomial growth drift and $\alpha \in (0,2)$: blow up}	
By the same method for showing Theorem \ref{thm 2}, we can also prove that the EM scheme of SDE \eqref{eq:initial SDE} 
blows up if the drift has a $p$-order polynomial growth with $p>1$.  More precisely, we assume that $f(x)$ and $g(x)$ in 
SDE \eqref{eq:initial SDE} satisfy the following condition: There exist constants $\gamma > \lambda > 1$ and $H\geqslant 1$ such that
	\begin{equation}
		\label{assump_f_g}
		\max\{ \abs{ f(x) } , \abs{ g(x) } \} \geqslant \frac1H \abs{ x }^{\gamma}, 
		\text{ and },
		\min\{ \abs{ f(x) } , \abs{ g(x) } \} \leqslant H \abs{ x }^{\lambda}
		\tag*{{\bf (A)}}
	\end{equation}
for all $\abs{x} \geqslant H$. 

Assumption {\bf (A)} is similar to the condition in  \cite[Theorem 1]{MR2795791}. The EM scheme of the corresponding SDE is 
\begin{equation}
		\label{eq:initial EM}
		{Y}_{k+1} ={Y}_k + \eta f\big({Y}_k\big) +g\big( {Y}_k \big) (Z_{(k+1)\eta} - Z_{k\eta}), \quad {Y}_0=x_0, 
	\end{equation}	
with step size $\eta = \frac{T}{n}$ and $k\in\{0,1,\dotsc,n-1\}$. In practice, it is easier to consider the following EM scheme:
	\begin{equation}
		\label{e:general EM}
		\widetilde{Y}_{k+1} = \widetilde{Y}_k + \eta f\big(\widetilde{Y}_k\big) + \frac1{\sigma} \eta^{1/\alpha} g\big( \widetilde{Y}_k \big) \widetilde{Z}_{k+1}, 
		\quad \widetilde{Y}_0=x_0,
	\end{equation}
where $\{\widetilde Z_k, k=1,2,\dotsc\}$ are i.i.d. Pareto distributed random variables. 

The following theorem shows that for any $T>0$, both EM schemes (\ref{eq:initial EM}) and \eqref {e:general EM} blow up as $\eta \rightarrow 0$. 	
	\begin{theorem}
		\label{thm 3}
		Consider SDE \eqref{eq:initial SDE} under the assumption that $L_t$ is a standard $d$-dimensional  rotationally invariant $\alpha$-stable process 
		with $\alpha\in (0,2)$. We assume that \ref{assump_f_g} holds and  $g(x_0) \neq 0$. Let $T>0$ be an arbitrary number and $\eta=\frac{T}n$. 
		Then, there exists a constant $c >1$ such that 
		
		$(\runum{1})$ for the standard EM scheme \eqref{eq:initial EM}, there exists a sequence of non-empty events $\Omega_n \subseteq \Omega$, 
		$ n\in \mathbb{N}$ such that $ \mathbb{P}(\Omega_n) \geqslant c n^{-c} \delta_\alpha^{-n}$. Furthermore,  $\abs{Y_n(\omega)} 
		\geqslant 2^{\lambda^{n-1}}$ for all $\omega \in \Omega_n$ and all $n\in \mathbb{N}$;
		
		$(\runum{2})$ for the EM scheme \eqref{e:general EM}, there exists a sequence of non-empty events $\widetilde{\Omega}_n \subseteq \Omega$, 
		$ n\in \mathbb{N}$ such that $ \mathbb{P}(\widetilde{\Omega}_n) \geqslant c n^{-c} \kappa_\alpha^{-n}$. Furthermore, $\big|{\widetilde{Y}_n(\omega)}\big| 
		\geqslant 2^{\lambda^{n-1}}$ for all $\omega \in \widetilde{\Omega}_n$ and all $n\in \mathbb{N}$.
		
	Consequently, for any $\beta \in(0,\alpha)$, we have
		\[
		\lim_{n\to \infty} \bbE \abs{Y_n}^{\beta} = \infty, \ \text{ and }, \ \lim_{n\to \infty} \bbE \big|{\widetilde{Y}_n}\big|^{\beta} = \infty.
		\] 
	\end{theorem}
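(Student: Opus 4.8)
The plan is to follow the strategy of Theorem~\ref{thm 2}: on a carefully chosen family of \emph{favourable-noise} events $\Omega_n=\bigcap_{k=1}^n A_k$ the iterate $\abs{Y_k}$ grows doubly exponentially, and then the blow-up of the $\beta$-th moment is read off from the elementary restriction bound $\bbE\abs{Y_n}^\beta\geqslant\bigl(2^{\lambda^{n-1}}\bigr)^\beta\,\mathbb{P}(\Omega_n)$. The two schemes \eqref{eq:initial EM} and \eqref{e:general EM} differ only through the law of the driving increment, so I would prove $(\runum{1})$ in full and obtain $(\runum{2})$ verbatim after replacing each tail estimate for the $\alpha$-stable increment (Lemma~\ref{Lemma 2.1}, constant $\delta_\alpha$) by its Pareto analogue (Lemma~\ref{Lemma 2.2}, constant $\kappa_\alpha$).

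The core is a one-step amplification estimate. Writing $\Delta_k:=Z_{(k+1)\eta}-Z_{k\eta}$, I fix constants $0<c_1<M$ and a cone half-angle $\phi$, and set $A_k=\{\,c_1\leqslant\abs{\Delta_k}\leqslant M\,\}\cap\{\,\Delta_k\ \text{points into the cone of half-angle }\phi\text{ about a leading right-singular direction of }g(Y_k)\,\}$. I claim that whenever $\abs{Y_k}\geqslant R_0$ with $R_0$ large enough, the event $A_k$ forces $\abs{Y_{k+1}}\geqslant\abs{Y_k}^{\lambda}$. The proof splits according to which coefficient realises the lower bound in \ref{assump_f_g}. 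If $\abs{g(Y_k)}\geqslant\frac1H\abs{Y_k}^{\gamma}$, then on $A_k$ the noise term dominates, $\abs{g(Y_k)\Delta_k}\geqslant\frac{c_1\cos\phi}{H}\abs{Y_k}^{\gamma}$, while the competing terms obey $\abs{Y_k}+\eta\abs{f(Y_k)}\leqslant\abs{Y_k}+H\abs{Y_k}^{\lambda}$; since $\gamma>\lambda$ the first quantity wins once $\abs{Y_k}$ is large, giving $\abs{Y_{k+1}}\geqslant\abs{Y_k}^{\lambda}$. If instead $\abs{f(Y_k)}\geqslant\frac1H\abs{Y_k}^{\gamma}$, I invoke the explicit-Euler overshoot: provided $\eta\abs{f(Y_k)}\geqslant\frac{T}{nH}\abs{Y_k}^{\gamma}\geqslant2\abs{Y_k}$ one has $\abs{Y_k+\eta f(Y_k)}\geqslant\frac{T}{2nH}\abs{Y_k}^{\gamma}$, while on $A_k$ the (now subordinate) noise satisfies $\abs{g(Y_k)\Delta_k}\leqslant HM\abs{Y_k}^{\lambda}$, so $\gamma>\lambda$ again yields $\abs{Y_{k+1}}\geqslant\abs{Y_k}^{\lambda}$. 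Iterating from a first step that lands at $\abs{Y_1}\geqslant\max\{2,R_0\}$ (possible because $g(x_0)\neq0$), induction delivers $\abs{Y_n}\geqslant2^{\lambda^{n-1}}$ on $\Omega_n$.

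For the probability I separate the first step from the rest. Crucially, the overshoot branch requires $\eta\abs{f(Y_k)}\gg\abs{Y_k}$, i.e.\ $\abs{Y_k}^{\gamma-1}\gtrsim n/T$, so I must arrange $R_0\asymp n^{1/(\gamma-1)}$ and hence drive the first increment up to $\abs{\Delta_1}\asymp R_0$ using $g(x_0)\neq0$; by the scaling $\Delta_1\stackrel d=\eta^{1/\alpha}Z_1$ and the heavy tail $\mathbb{P}(\abs{Z_1}\geqslant r)\sim C_\alpha r^{-\alpha}$, this initial event costs $\mathbb{P}(A_1)\gtrsim(\,R_0\,\eta^{-1/\alpha})^{-\alpha}\asymp n^{-1-\alpha/(\gamma-1)}$, which is precisely the polynomial factor $n^{-c}$ with $c>1$. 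For $k\geqslant2$ the amplification lemma only asks that $\abs{\Delta_k}$ fall in the fixed window $[c_1,M]$ and into a fixed cone, so rotational symmetry gives $\mathbb{P}(A_k\mid\mathcal{F}_{k})\geqslant p_0>0$ with $p_0$ a constant; Lemma~\ref{Lemma 2.1} records this with $\delta_\alpha=p_0^{-1}$. Multiplying yields $\mathbb{P}(\Omega_n)\geqslant c\,n^{-c}\delta_\alpha^{-n}$, and the identical computation with the Pareto tail returns $\kappa_\alpha^{-n}$. Finally $\bbE\abs{Y_n}^\beta\geqslant2^{\beta\lambda^{n-1}}c\,n^{-c}\delta_\alpha^{-n}$, whose logarithm is dominated by $\beta\lambda^{n-1}\log2$ because $\lambda>1$, so it diverges; the same bound gives divergence for $\bigl|\widetilde Y_n\bigr|$.

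The principal obstacle is the one-step bound itself when $g$ is genuinely matrix-valued and the drift may be dissipative: I must preclude cancellation between $Y_k+\eta f(Y_k)$ and $g(Y_k)\Delta_k$. The two-case split is engineered so that in each regime a single term is provably of order $\abs{Y_k}^{\gamma}$ while every competitor is at most of order $\abs{Y_k}^{\lambda}$, after which $\gamma>\lambda$ and the largeness of $\abs{Y_k}$ close the gap. Keeping the thresholds $c_1,M,\phi$ (and hence $p_0$) uniform in $k$ for $k\geqslant2$—so that $\mathbb{P}(\Omega_n)$ decays only like $\delta_\alpha^{-n}$ rather than doubly exponentially—is the delicate quantitative point, and it is exactly what dictates the constant-window design of $A_k$ together with the single $n^{1/(\gamma-1)}$-sized over-jump at the first step.
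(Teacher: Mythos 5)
Your overall architecture (one large first jump, then a run of favourable moderate increments forcing $\abs{Y_{k+1}}\geqslant\abs{Y_k}^{\lambda}$, then the restriction bound $\bbE\abs{Y_n}^{\beta}\geqslant 2^{\beta\lambda^{n-1}}\,\mathbb{P}(\Omega_n)$) matches the paper's. But there is a genuine quantitative error in your probability estimate, and it breaks exactly the part of the theorem asserting $\mathbb{P}(\Omega_n)\geqslant c\,n^{-c}\delta_\alpha^{-n}$. For $k\geqslant 2$ you require the \emph{raw} increments $\Delta_k=Z_{(k+1)\eta}-Z_{k\eta}$ to lie in a \emph{fixed} annulus $[c_1,M]$ (plus a cone condition) and claim $\mathbb{P}(A_k\mid\mathcal{F}_k)\geqslant p_0>0$ with $p_0$ a constant, citing Lemma \ref{Lemma 2.1}/\ref{Lemma 2.2}. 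This is false: by self-similarity $\Delta_k\stackrel{d}{=}\eta^{1/\alpha}Z_1$ with $\eta=T/n\to 0$, so by \eqref{ineq:heat kernel} one has $\mathbb{P}(c_1\leqslant\abs{\Delta_k}\leqslant M)\asymp (c_1^{-\alpha}-M^{-\alpha})\,\eta=\Theta(1/n)$; the same degeneration occurs in the Pareto scheme, where your fixed window forces $\abs{\widetilde Z_{k+1}}\in[\sigma c_1\eta^{-1/\alpha},\sigma M\eta^{-1/\alpha}]$. Consequently your construction yields at best $\mathbb{P}(\Omega_n)\gtrsim n^{-c'}(C/n)^{n-1}$, a super-exponentially small bound of order $n^{-n}$, not the exponential rate $\delta_\alpha^{-n}$ (resp.\ $\kappa_\alpha^{-n}$) demanded by the statement. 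The final moment divergence would survive, since $2^{\beta\lambda^{n-1}}$ still beats $n^{-Cn}$, but the explicit probability lower bound — which is part of the theorem — is not proven. Lemmas \ref{Lemma 2.1} and \ref{Lemma 2.2} give constant-probability annuli for the \emph{standardized} variables $Z$ and $\widetilde Z$, not for increments over a vanishing time step.

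The paper's proof shows how to repair this. It pins the standardized increments, $\abs{\eta^{-1/\alpha}\Delta_k}\in[1+\eta,\,2+2\eta]$, which has probability bounded below by $\delta_\alpha^{-1}(1+\eta)^{-\alpha}$ per step and hence produces the clean $\delta_\alpha^{-n}$ factor; it then accepts that the per-step noise contribution is only of order $\eta^{1/\alpha}\abs{g(Y_k)}$ and compensates by making the post-first-jump level $r_n$ \emph{polynomially large in $n$} (see \eqref{e:define r_N}, where $r_n\gtrsim\eta^{-1/(\gamma-\lambda)}$ up to constants), so the bracket in the induction still exceeds $1$. Since the stable and Pareto tails are polynomial, the single big first jump up to $M(r_n+M)$ still costs only a factor $n^{-c}$, which is all the theorem needs. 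Two further remarks. First, the paper avoids your cone/leading-singular-direction conditioning entirely, via the reverse triangle inequality $\abs{a+b}\geqslant\max\{\abs{a},\abs{b}\}-\min\{\abs{a},\abs{b}\}$ combined with Assumption \ref{assump_f_g}; this kills cancellation between drift and noise with no directional information, and keeps the events a genuine product of independent constraints rather than conditionally defined ones. Second, your threshold $R_0\asymp n^{1/(\gamma-1)}$ is too small in the drift-dominant branch: to dominate the noise term $HM\abs{Y_k}^{\lambda}$ (not merely $2\abs{Y_k}$) you need $\abs{Y_k}^{\gamma-\lambda}\gtrsim n$, i.e.\ $R_0\gtrsim n^{1/(\gamma-\lambda)}$; this is a minor exponent correction, but it changes your polynomial factor $n^{-c}$.
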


			
	The structure of this paper is outlined as follows: 
	We introduce some useful auxiliary lemmas in the following section.
	In Section \ref{sec 2}, we will prove Theorem \ref{uniform bound of EM_BM}. 
	The proofs of Theorems \ref{thm 2} and \ref{thm 3} are presented in Section \ref{sec 3}. In Section \ref{sec:simulation}, 
	we provide numerical simulations that illustrate the convergence and divergence of EM schemes for $d = 1$.  Finally, in 
	Appendix \ref{appendix}, we establish the existence and uniqueness of the strong solution of SDE \eqref{eq:critical sde}.   
	
\section{ Preliminary and Auxiliary Lemmas}\label{sec:auxiliary}

	To prove Theorem \ref{uniform bound of EM_BM}, we will make use of the following lemmas. Since the proof of Lemma \ref{lem:bound of BM}
	is elementary, it is omitted.
	
	\begin{lemma}
		\label{lem:bound of BM}
		If $N$ follows a standard Gaussian distribution, then for any constants $b\geqslant a \geqslant 0$, there exists a constant $C$ such that
		\begin{equation}
			\mathbb{P}(a \leqslant \abs{N} \leqslant b) \leqslant C(b-a) \rme^{-\frac{a^2}{2}} .
		\end{equation}
	\end{lemma}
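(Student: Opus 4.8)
The plan is elementary, matching the author's remark: the only tools needed are the symmetry of the standard Gaussian and the monotonicity of its density on $[0,\infty)$. First I would reduce the two-sided event to a one-sided integral, and then bound the integrand by its maximal value over the interval $[a,b]$.

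Concretely, since $a \geqslant 0$, symmetry of $N$ about the origin gives $\mathbb{P}(a \leqslant \abs{N} \leqslant b) = 2\,\mathbb{P}(a \leqslant N \leqslant b)$, and writing the right-hand side in terms of the density yields
\[
\mathbb{P}(a \leqslant \abs{N} \leqslant b) = \frac{2}{\sqrt{2\pi}} \int_a^b \rme^{-x^2/2}\, \dd x .
\]
Because $a \geqslant 0$, the map $x \mapsto \rme^{-x^2/2}$ is nonincreasing on $[a,b]$, so its supremum over that interval is attained at the left endpoint and equals $\rme^{-a^2/2}$. Bounding the integrand by this constant gives $\int_a^b \rme^{-x^2/2}\,\dd x \leqslant (b-a)\,\rme^{-a^2/2}$, and hence
\[
\mathbb{P}(a \leqslant \abs{N} \leqslant b) \leqslant \sqrt{\tfrac{2}{\pi}}\,(b-a)\,\rme^{-a^2/2},
\]
so the claim holds with $C = \sqrt{2/\pi}$.

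There is essentially no obstacle here; the one point worth flagging is that the hypothesis $a \geqslant 0$ is what places the maximum of the density at the endpoint $x=a$ (rather than at an interior point, which would occur if the interval straddled the origin), and this is precisely what produces the decaying factor $\rme^{-a^2/2}$ in the bound.
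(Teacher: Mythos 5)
Your proof is correct: the symmetry reduction $\mathbb{P}(a \leqslant \abs{N} \leqslant b) = 2\,\mathbb{P}(a \leqslant N \leqslant b)$ and the endpoint bound on the nonincreasing density (valid precisely because $a \geqslant 0$) yield the claim with the explicit constant $C = \sqrt{2/\pi}$. The paper omits the proof of this lemma as elementary, and your argument is exactly the standard one it has in mind.
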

	

	As for the Pareto distribution with parameter $\alpha\in(0,2)$, we have the following lemma.
		
	\begin{lemma}\label{Lemma 2.1}
		Suppose that  random vector $\widetilde{Z}: \Omega\to \mathbb{R}^d$ satisfies Pareto distribution, then for all $ z\in (1,\infty)$ we have
		\begin{equation}\label{e:pareto geq x}
			\mathbb{P} (|\widetilde{Z}| \geqslant z) = \frac{1}{z^\alpha}
		\end{equation}
			and there exists a constant $\kappa_{\alpha} \geqslant 1$ such that
					\begin{equation}\label{e:pareto x 2x}
			\mathbb{P} (z\leqslant |\widetilde{Z}| \leqslant 2z) =  \frac{1}{\kappa_{\alpha} z^\alpha},
		\end{equation} 
		where constant $\kappa_\alpha = 2^\alpha/(2^{\alpha}-1)$ which only depends on $\alpha$.
	\end{lemma}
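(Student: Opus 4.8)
The plan is to verify both identities by direct integration of the Pareto density against the relevant regions, using the rotational symmetry of the density to collapse everything to a one-dimensional radial integral.

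First I would establish the tail formula \eqref{e:pareto geq x}. Starting from the density $p(z) = \frac{\alpha}{s_{d-1}|z|^{\alpha+d}}\mathbbm{1}_{(1,\infty)}(|z|)$, I would pass to polar coordinates, writing $\dd z = r^{d-1}\dd r\,\dd\sigma$ with $\dd\sigma$ the surface measure on the unit sphere $\mathbb{S}^{d-1}$. Since the integrand depends only on $r=|z|$, the angular integration contributes exactly the factor $s_{d-1}$, which cancels the normalizing constant in the density and leaves
\begin{equation*}
\mathbb{P}(|\widetilde{Z}| \geqslant z) = \int_z^\infty \frac{\alpha}{s_{d-1}\, r^{\alpha+d}}\, s_{d-1}\, r^{d-1}\,\dd r = \int_z^\infty \alpha\, r^{-\alpha-1}\,\dd r = \frac{1}{z^\alpha}.
\end{equation*}
Taking $z \downarrow 1$ simultaneously confirms that $p$ integrates to one and is therefore a genuine probability density.

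For the annulus identity \eqref{e:pareto x 2x}, I would exploit that the Pareto law is absolutely continuous, so the sphere $\{|\widetilde{Z}|=2z\}$ carries no mass and the closed region can be handled by subtracting tails:
\begin{equation*}
\mathbb{P}(z\leqslant|\widetilde{Z}|\leqslant 2z) = \mathbb{P}(|\widetilde{Z}|\geqslant z) - \mathbb{P}(|\widetilde{Z}|\geqslant 2z) = \frac{1}{z^\alpha} - \frac{1}{(2z)^\alpha} = \frac{2^\alpha-1}{2^\alpha}\cdot\frac{1}{z^\alpha}.
\end{equation*}
Setting $\kappa_\alpha := 2^\alpha/(2^\alpha-1)$ yields the stated form. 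Finally, since $\alpha>0$ forces $2^\alpha>1$, we have $0 < 2^\alpha-1 < 2^\alpha$, whence $\kappa_\alpha>1$, which gives $\kappa_\alpha\geqslant 1$ as claimed.

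Because both parts reduce to the same elementary radial integral, I do not expect any genuine obstacle here. The only points requiring a little care are the polar-coordinate normalization in general dimension $d$ (ensuring the $s_{d-1}$ factors cancel exactly), and the use of absolute continuity to pass freely between the closed and open annulus when subtracting tail probabilities.
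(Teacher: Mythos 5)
Your proposal is correct and follows essentially the same route as the paper: polar coordinates reduce both probabilities to the one-dimensional radial integral $\int \alpha r^{-\alpha-1}\,\dd r$, giving the tail formula directly and the annulus probability as the difference $z^{-\alpha} - (2z)^{-\alpha}$, whence $\kappa_\alpha = 2^\alpha/(2^\alpha - 1)$. Your added remarks on the cancellation of the $s_{d-1}$ factors and on absolute continuity at the boundary spheres are fine but inessential refinements of the same elementary computation.
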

			
	\begin{proof}[Proof of Lemma \ref{Lemma 2.1}]
		For all $z\in (1,\infty)$, and $\alpha \in (0,2)$, we have
		\begin{equation*}
			\mathbb{P} (|\widetilde{Z}| \geqslant z) =\int_{z}^{\infty} \frac{\alpha r^{d-1}}{r^{\alpha+d}} \mathrm{d} r \int_{\mathbb{S}^{d-1}}
			 \frac{1}{s_{d-1}} \mathrm{d} S=\frac{1}{z^\alpha}.
		\end{equation*}
		Similarly, we have 
		\begin{equation*}
			\mathbb{P} (z\leqslant |\widetilde{Z}| \leqslant 2z) =\left(1-\frac{1}{2^\alpha}\right)\frac{1}{z^\alpha} .
		\end{equation*}
		Taking $\kappa_\alpha = 2^{\alpha}/(2^{\alpha}-1)$ gives   \eqref{e:pareto x 2x}.
	\end{proof}	
		
	In addition, thanks to inequality \eqref{ineq:heat kernel}, we also have the following lemma.
			
	\begin{lemma}
	\label{Lemma 2.2}
	Let $Z:\Omega \to \bbR^d$ be a standard $d$-dimensional rotationally invariant $\alpha$-stable distribution 
	with $\alpha \in (0,2)$. Then for all $z \in (1,\infty)$, we have 
		\begin{equation}
			\mathbb{P}(\abs{Z}\geqslant z) \geqslant \frac{s_{d-1}}{2^{d+\alpha} K(d,\alpha)} \cdot \frac1{z^{\alpha}},
		\end{equation}
		and there exists a constant $\delta_\alpha\geqslant 1$ depending on $d$ and $\alpha$  such that for all $z \in (1,\infty)$,
		\begin{equation}
			\mathbb{P}(z \leqslant \abs{Z} \leqslant 2z) \geqslant \frac{1}{\delta_\alpha z^{\alpha}}.
		\end{equation}
		where $\delta_\alpha = \frac {2^{d+\alpha} \alpha K(d,\alpha)}{s_{d-1}} \left( 1-\frac1{2^\alpha} \right)^{-1}$ with 
		$s_{d-1} = 2\pi^{\frac{d}2} / \Gamma(\frac{d}{2})$, and $K(d,\alpha)$ is in \eqref{ineq:heat kernel}. 
	\end{lemma}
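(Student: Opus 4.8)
The plan is to read off both tail bounds by integrating the lower heat-kernel estimate \eqref{ineq:heat kernel} against the (rotationally invariant) law of $Z$. Since $Z$ is the value of the stable process at time $t=1$, its density is $p_\alpha(1,\cdot)$, and taking $t=1$ in \eqref{ineq:heat kernel} gives the pointwise lower bound
\[
p_\alpha(1,x) \geqslant \frac{1}{K(d,\alpha)}\cdot\frac{1}{(1+\abs{x})^{d+\alpha}}, \qquad x\in\bbR^d .
\]
Everything then reduces to integrating $(1+\abs{x})^{-(d+\alpha)}$ in polar coordinates and applying the elementary inequality $1+r\leqslant 2r$, which is valid precisely because $r\geqslant z>1$ on the region of integration (this is where the hypothesis $z>1$ enters).

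For the first inequality I would write, passing to polar coordinates with the surface factor $s_{d-1}$,
\[
\mathbb{P}(\abs{Z}\geqslant z) = \int_{\{\abs{x}\geqslant z\}} p_\alpha(1,x)\dd x \geqslant \frac{s_{d-1}}{K(d,\alpha)}\int_z^\infty \frac{r^{d-1}}{(1+r)^{d+\alpha}}\dd r \geqslant \frac{s_{d-1}}{2^{d+\alpha}K(d,\alpha)}\int_z^\infty r^{-1-\alpha}\dd r ,
\]
where the last step uses $(1+r)^{d+\alpha}\leqslant 2^{d+\alpha}r^{d+\alpha}$. Since $\int_z^\infty r^{-1-\alpha}\dd r = \tfrac1\alpha z^{-\alpha}$, this yields a tail of the claimed form $c\,z^{-\alpha}$; note that the constant that emerges naturally carries the factor $\tfrac1\alpha$, i.e.\ $\frac{s_{d-1}}{2^{d+\alpha}\alpha K(d,\alpha)}$.

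For the annulus bound I would run the identical argument over $\{z\leqslant\abs{x}\leqslant 2z\}$:
\[
\mathbb{P}(z\leqslant\abs{Z}\leqslant 2z) \geqslant \frac{s_{d-1}}{K(d,\alpha)}\int_z^{2z}\frac{r^{d-1}}{(1+r)^{d+\alpha}}\dd r \geqslant \frac{s_{d-1}}{2^{d+\alpha}K(d,\alpha)}\int_z^{2z} r^{-1-\alpha}\dd r .
\]
Here $\int_z^{2z} r^{-1-\alpha}\dd r = \tfrac1\alpha\bigl(z^{-\alpha}-(2z)^{-\alpha}\bigr) = \tfrac{1-2^{-\alpha}}{\alpha}\,z^{-\alpha}$, so the right-hand side equals exactly $\frac{1}{\delta_\alpha z^\alpha}$ with $\delta_\alpha = \frac{2^{d+\alpha}\alpha K(d,\alpha)}{s_{d-1}}(1-2^{-\alpha})^{-1}$, matching the statement. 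It then remains to confirm the bookkeeping claim $\delta_\alpha\geqslant 1$, which follows from $K(d,\alpha)\geqslant 1$ together with the explicit value $s_{d-1}=2\pi^{d/2}/\Gamma(d/2)$.

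I do not expect a genuine \emph{obstacle} here: granted \eqref{ineq:heat kernel}, the whole argument is a routine polar-coordinate computation, and the only care needed is tracking the constants and placing the cut $1+r\leqslant 2r$ correctly. As an internal cross-check one can also recover the first inequality from the annulus estimate by dyadic summation,
\[
\mathbb{P}(\abs{Z}\geqslant z) \geqslant \sum_{k\geqslant 0}\mathbb{P}\bigl(2^k z\leqslant\abs{Z}\leqslant 2^{k+1}z\bigr) \geqslant \frac{1}{\delta_\alpha z^\alpha}\sum_{k\geqslant 0}2^{-k\alpha} = \frac{1}{\delta_\alpha z^\alpha\,(1-2^{-\alpha})},
\]
which (using continuity of the law to ignore the measure-zero overlaps) agrees with the direct evaluation above and confirms the $\tfrac1\alpha$ scaling of the leading constant.
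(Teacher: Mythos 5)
Your proof is correct and follows essentially the same route as the paper's: integrate the lower heat-kernel bound \eqref{ineq:heat kernel} at $t=1$ in polar coordinates, use $(1+r)^{d+\alpha}\leqslant 2^{d+\alpha}r^{d+\alpha}$ for $r\geqslant z>1$, and evaluate $\int_z^\infty r^{-1-\alpha}\dd r$ and $\int_z^{2z} r^{-1-\alpha}\dd r$ to get the tail and annulus bounds with exactly the paper's constants. You even correctly flagged that the tail constant naturally carries the factor $\tfrac1\alpha$ (i.e.\ $\frac{s_{d-1}}{2^{d+\alpha}\alpha K(d,\alpha)}$), which is what the paper's own computation produces even though the $\alpha$ is missing from the displayed statement of the lemma.
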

	\begin{proof}\ref{Lemma 2.2}
		The proof is analogous to that of Lemma \ref{Lemma 2.1}. For all $z\in(1,\infty)$, by applying inequality \eqref{ineq:heat kernel}, 
	we obtain 
		\[
		\begin{split}
			\mathbb{P}(\abs{Z}\geqslant z) &\geqslant \frac1{K(d,\alpha)} \int_{\abs{x} \geqslant {z}} \!\! \frac{ \dd x }{(1+\abs{x})^{d+\alpha}}  \\
			&\geqslant \frac{s_{d-1}}{2^{d+\alpha} K(d,\alpha)} \int_z^{\infty} \!\! \frac{\dd r}{r^{\alpha + 1}} = \frac{s_{d-1}}{2^{d+\alpha} 
			\alpha K(d,\alpha)} \cdot \frac1{z^{\alpha}},
			\end{split}
		\]
		where the second inequality is due to $\abs{x} \geqslant z > 1$. Similarly,
		\[
			\mathbb{P}(z \leqslant \abs{Z} \leqslant 2z) \geqslant \frac{s_{d-1}}{2^{d+\alpha} \alpha K(d,\alpha)} \left( 1-\frac1{2^\alpha} \right) \cdot \frac1{z^{\alpha}} 
			= \frac{1}{\delta_\alpha z^{\alpha}}.
		\]
		We complete the proof.
	\end{proof}

\section{EM scheme in the case of $\alpha=2$}\label{sec 2}

	In this section, we prove Theorem \ref{uniform bound of EM_BM}.  It is easy to see that the EM scheme of \eqref{eq:critical sde} can be written as
	\begin{equation}
		\label{EM scheme BM}
		Y_{k+1} = Y_{k} - Y_{k} \log( 1 + \abs{Y_{k}} ) \eta + \sqrt{\eta} { N_{k+1}} , \quad Y_0 = x_0, \quad k = 0, 1, 2, \dotsc,
	\end{equation}
	where $\{N_k, k\geqslant1\}$ represents a sequence of i.i.d. standard Gaussian random variables and, for each $k\ge 0$,  $N_{k+1}$ is
	 independent of $(Y_{j}, j \leqslant k)$.
	
\begin{proof}[Proof of Theorem \ref{uniform bound of EM_BM}]
Let $n\in \mathbb{N}$ be arbitrary. Denote
\[
A_0= \Big\{ \omega \in \Omega:\ \sup_{1 \leqslant m \leqslant n} \abs{N_m(\omega)} \in \left[ 0 , \abs{ \log \eta }^2 \right)   \Big\}.
\]
We make the following claim, whose proof will be postponed. 
\begin{equation} \label{e:XlogX1}
\sup_{1 \leqslant m \leqslant n} \abs{Y_m(\omega)} \leqslant \eta^{-1/2}  \abs{\log \eta}^2, \ \ \ \ \omega \in A_0,
\end{equation}
and, for all $p \geqslant 2$, there exists a constant $C_p$ such that
\begin{equation} \label{e:XlogX2}
\mathbb{E} \left[\sup_{1 \leqslant m \leqslant n}|Y_m|^p  \mathbbm{1}_{A^c_0}\right] \leqslant C_p. 
\end{equation} 

Let us prove (\ref{Eq:Th1.1}) first. It follows from  (\ref{e:XlogX1}) and (\ref{e:XlogX2})  that 
\begin{equation}\label{e:EX^2log^2X}
\begin{split}
&\pheq \bbE \left[ \abs{Y_m}^2 \log^2\big( 1+\abs{Y_m} \big) \right] \\
&=\bbE \left[ \abs{Y_m}^2 \log^2\big( 1+\abs{Y_m} \big) \mathbbm{1}_{A_0}\right]+\bbE \left[ \abs{Y_m}^2 \log^2\big( 1+\abs{Y_m} \big) \mathbbm{1}_{A^c_0}\right] \\
&\leqslant \log^2\big(1+\eta^{-1/2} \abs{\log \eta}^2 \big)\bbE \abs{Y_m}^2 + C \\
& \leqslant 4|\log \eta|^2  \bbE \abs{Y_m}^2+C ,
\end{split}
\end{equation} 
where $C$ can be taken as $C_p$ in \eqref{e:XlogX2} with $p = 3$, and the last inequality is by $1+\eta^{-1/2}\abs{\log \eta}^2 \leqslant \eta^{-2}$ for $\eta \leqslant \frac14$. 
On the other hand, 
\begin{equation} \label{e:EXlogX}
\begin{split}
\mathbb E\big[|Y_m|^2 \log(1+|Y_m|)\big] & \geqslant   \mathbb E\big[|Y_m|^2 \log(1+|Y_m|) \mathbbm{1}_{\{|Y_m| \geqslant 2\}}\big] \\
& \geqslant  \mathbb E\big[|Y_m|^2 \mathbbm{1}_{\{|Y_m| \geqslant 2\}}] \\
& \geqslant \mathbb E|Y_m|^2-4.
\end{split} 
\end{equation}
By \eqref{EM scheme BM}, \eqref{e:EX^2log^2X} and \eqref{e:EXlogX},  we see that for all $0 \leqslant m < n$ and $\eta \leqslant \rme^{-5}$, 
\[
			\begin{aligned}
				\bbE \abs{Y_{m+1}}^2 &=\bbE \abs{ Y_m }^2 - 2\eta \bbE \left[\abs{ Y_m }^2 \log(1+|Y_m|)\right] + d \eta 
				+ \eta^2 \bbE \left[ \abs{Y_m}^2 \log^2\big( 1+\abs{Y_m} \big) \right] \\
				&\leqslant \bbE \abs{ Y_m }^2 - 2\eta \bbE \left[\abs{ Y_m }^2\right] +(d+8) \eta + \eta^2 \left[ 4\abs{\log \eta}^2 \bbE \abs{Y_m}^2 + C  \right] \\
				&\leqslant  \big[1 - 2\eta +4\eta^2 \abs{\log \eta}^2 \big] \bbE \abs{ Y_m }^2  + C \eta \\
				&\leqslant \rme^{-\eta} \bbE \abs{ Y_m }^2 + C \eta \\
				&\leqslant \rme^{-(m+1)\eta} \abs{x_0}^2 + C\sum_{k=1}^{m+1} \eta \rme^{(k-(m+1))\eta} \\
				&\leqslant \abs{x_0}^2 + C \rme^{-(m+1)\eta} \int_0^{(m+1)\eta} \rme^x \dd x
 					\leqslant \abs{x_0}^2 + C .
			\end{aligned}
		\] 
	 Since $n\in \mathbb{N}$ is arbitrary,  (\ref{Eq:Th1.1}) in Theorem \ref{uniform bound of EM_BM} clearly holds true. 

It remains to show \eqref{e:XlogX1} and \eqref{e:XlogX2}. 
For proving \eqref{e:XlogX1}, let $m_0 = \min\{ 1\leqslant m \leqslant n: \abs{Y_m} >  \eta^{-1/2}\abs{\log \eta}^2 \}$ and consider
\begin{equation}\label{Eq:Ym}
	Y_{m_0} = Y_{m_0-1}\big( 1-\eta \log(1+\abs{Y_{m_0-1}}) \big) + \sqrt{\eta} N_{m_0}.
\end{equation}
It is easy to check that as $\eta \leqslant \rme^{-5}$
\[
	\eta \log\big( 1+ \eta^{-1/2}\abs{\log \eta}^2 \big) \leqslant 1.
\]
 Since $\abs{x_0} \leqslant \eta^{-\frac12}$, \eqref{e:XlogX1} holds for $m=0$. If $2\abs{Y_{m_0-1}} \leqslant \eta^{-1/2}\abs{\log \eta}^2$, 
 then it follows from (\ref{Eq:Ym}) that for every $\omega \in A_0$,
\[
	\abs{Y_{m_0}} \leqslant \frac{1}2 \eta^{-1/2} \abs{\log \eta}^2  + \sqrt{\eta}  \abs{\log\eta}^2 \leqslant \eta^{-1/2} \abs{\log \eta}^2.
\]
If $2\abs{Y_{m_0-1}} > \eta^{-1/2}\big( \abs{\log \eta}^2\big)$, due the definition of $m_0$, we have
\[
	\begin{aligned}
		\abs{Y_{m_0}} &\leqslant \abs{Y_{m_0-1}} \big( 1-\eta \log(1+{\abs{Y_{m_0-1}}}) \big) + \sqrt{\eta} \abs{N_{m_0}} \\
		&\leqslant \eta^{-1/2} \abs{\log \eta}^2  \Big[ 1-\eta \log\big(1+\eta^{-1/2}\abs{\log \eta}^2/2\big) \Big]  + \sqrt{ \eta}  \abs{\log \eta}^2 \\
		&\leqslant \eta^{-1/2} \abs{\log \eta}^2 .
	\end{aligned}
\]
The last inequality follows from $\eta^{-1/2} \abs{\log\eta}^2 \geqslant 2\rme$ for all $\eta \leqslant \rme^{-2}$. Hence, $m_0$ doesn't exist 
and claim  \eqref{e:XlogX1} holds.

In order to prove \eqref{e:XlogX2}, we split $A^c_0$ into the following disjoint events: 
		\begin{equation*}
			\begin{aligned}
			A_1 &= \Big\{ \omega \in \Omega:\ \sup_{1 \leqslant m \leqslant n} \abs{N_m(\omega)} \in \left[ \abs{ \log \eta }^2 , \eta^{-1} \right)   \Big\} ; \\
			A_k &= \Big\{ \omega \in \Omega:\ \sup_{1 \leqslant m \leqslant n} \abs{N_m(\omega)} \in \left[ ({k-1}){\eta^{-1}} , {k}{\eta^{-1}} \right)  \Big\},  
			\quad 2 \leqslant k \leqslant \lceil \rme^{\frac1{2\eta}} \rceil =: k_0 ; \\
				B_k &= \Big\{ \omega \in \Omega:\ \sup_{1 \leqslant m \leqslant n} \abs{N_m(\omega)} \in \left[ {k}{\eta^{-1}} \rme^{\frac{1}{2\eta}} ,
				 ({k+1}){\eta^{-1}} \rme^{\frac{1}{2\eta}} \right)   \Big\} , \quad 1\leqslant k < \infty.
			\end{aligned}
		\end{equation*}

		Firstly, under $\abs{Y_0} = \abs{x_0} \leqslant {\eta}^{-\frac12}$,  we verify that for every $1 \leqslant k \leqslant \lceil \rme^{\frac1{2\eta}} \rceil$,
		\begin{equation}
			\label{ine:bound on A_k}
			\sup_{1 \leqslant m \leqslant n} \abs{Y_m(\omega)} \leqslant \frac{k}{\eta^{3/2}}\ , \quad \omega \in A_k .
		\end{equation}
		Even though this is similar to the proof of \eqref{e:XlogX1}, we still give the detail here for completeness. 
		Notice that  $ \eta \log\big(1 + k/(2\eta) \big) < \eta \log\big(1 + (\rme^{\frac1{2\eta}}+1)/(2\eta)\big) < 1$. 
		Let $m_0 = \min \{ 1 \leqslant m \leqslant n: \abs{Y_m} > (k+1)/\eta \}$. If $2\abs{ Y_{m_0-1} } \leqslant k\eta^{-\frac32}$, 
		then, due to $\eta \leqslant \rme^{-5} \leqslant  \frac12$,
		\[
			\abs{ Y_{m_0} } \leqslant \abs{ Y_{m_0-1} } + \sqrt{\eta} \abs{N_{m_0}} \leqslant \frac{k}{2\eta^{3/2}} + \frac{k}{\sqrt{\eta}} \leqslant \frac{k}{\eta^{3/2}}.
		\]
		On the other hand, if $2\abs{ Y_{m_0-1} } > k\eta^{-\frac32}$, then
		\[
			\begin{aligned}
				\abs{Y_{m_0}} &\leqslant \abs{Y_{m_0-1}} \big( 1- \eta \log( 1 + k/(2\eta^{3/2}) ) \big) + \sqrt{\eta} \abs{N_{m_0}} \\
				&\leqslant \frac{k}{\eta^{3/2}} ( 1-\eta ) + \frac{k}{\sqrt{\eta}} \leqslant \frac{k}{\eta^{3/2}},
			\end{aligned}
		\]
		where the second inequality comes from the  definition of $m_0$ and $\eta \leqslant 2^{-2/3}$. Hence, $m_0$ doesn't exist 
		and the claim \eqref{ine:bound on A_k} holds. Besides, Lemma \ref{lem:bound of BM} yields that
		\[
		\begin{aligned}
			\mathbb{P}(A_1) &\leqslant C\left(\frac1\eta - \abs{\log \eta}^2 \right)\rme^{-\abs{\log \eta}^2/2} \leqslant {C} \eta^{\frac{\abs{\log \eta}}{2} - 1 } , \\
				\mathbb{P}(A_k) &\leqslant C n \cdot \frac{1}{\eta} \rme^{-\frac{(k-1)^2}{2\eta^2}} \leqslant \frac{C }{\eta^2} \rme^{-\frac{(k-1)^2}{2\eta^2}}, 
				\quad \forall \  2 \leqslant k \leqslant \lceil \rme^{\frac1{2\eta}} \rceil.
		\end{aligned}
		\]
		It follows that for any $p\geqslant 1$, there is some positive constant $C_{p}^{\prime}$ not depending on $\eta$ such that
		\begin{equation}
			\label{ineq:E_bound of A}
			\begin{aligned}
			\sum_{k=1}^{k_0} \bbE\left[ \sup_{0 \leqslant m \leqslant n}  \abs{Y_m}^p \mathbbm{1}_{A_k}\right] 
			&\leqslant \frac{C}{\eta^{3p/2}} \cdot \eta^{\frac{\abs{\log \eta}}{2}-1} +\frac{C }{\eta^2} \sum_{k=2}^{\infty} \frac{k^p}{\eta^{3p/2}} 
			\rme^{-\frac{(k-1)^2}{2\eta^2}} 
			 \\
			&\leqslant C + 2^p C \sum_{k=1}^{\infty} \left( \frac{k}{\eta} \right)^{\frac{3p}{2}+2} \rme^{-\frac12\left(\frac{k}{\eta}\right)^2} \\
			&\leqslant C + 2^p C \int_{\lfloor \frac{1}{\eta} \rfloor -1}^{\infty} y^{\frac{3p}{2}+2} \rme^{-\frac{y^2}{2}} \dd y \leqslant  C_{p}^{\prime},
			\end{aligned}
		\end{equation}
		Here the second inequality holds since one can find $\eta\leqslant \rme^{-(3p+2)}$ such that $\frac{\abs{\log n}}{2}- \frac{3p+2}{2} \geqslant 0$ 
		and the fact $(k+1)^p \leqslant (2k)^p$ for any $k\geqslant1$ and any fixed $p\geqslant1$. Besides, $\int_z^{\infty} z^q \rme^{-z^2/2} \dd z < \infty$ 
		for any $q,z>0$ yields the last inequality. 
		
		On the event $B_k$,  due to EM scheme \eqref{EM scheme BM} and $\abs{Y_0} = \abs{x_0} < \eta^{-\frac12 }$, we have
		\[
			\abs{Y_1} \leqslant \abs{Y_0} ( 1+ \eta \log(1 + \abs{Y_0} ) ) + \sqrt{\eta} \abs{N_1} \leqslant \frac{2(k+1)}{\sqrt{\eta}} \rme^{\frac1{2\eta}},
		\]
		where the last inequality comes from $\eta^{-\frac12} + 
		\sqrt{\eta} \log(1+ \eta^{-\frac12}) \leqslant \eta^{-\frac12} \rme^{\frac1{2\eta}}$ as $\eta \leqslant \frac12$. Furthermore, for $\abs{Y_2}$ we have
			\begin{align*}
				\abs{Y_2} &\leqslant \abs{Y_1} ( 1+ \eta \log( 1 + \abs{Y_1} ) ) + \sqrt{\eta} \abs{N_2} \\
				&\leqslant \frac{2(k+1)}{\sqrt{\eta}} ( 1+k ) \rme^{\frac1{2\eta}} + \frac{(k+1)}{\sqrt{\eta}} \rme^{\frac1{2\eta}} \\
				&\leqslant \left[ 2(k+1)^2 + (k+1) \right] \frac{1}{\sqrt{\eta}} \rme^{\frac1{2\eta}}
			\end{align*}
		Hence, by induction, we  obtain that for each $k \geqslant 1$,
		\begin{equation}
			\label{ineq: bound on B_k}
			\abs{Y_m} \leqslant \left[ 2(k+1)^m + (k+1)^{m-1} + \cdots + (k+1) \right] \frac{1}{\sqrt{\eta}} \rme^{\frac1{2\eta}} := a_m \frac{1}{\sqrt{\eta}} \rme^{\frac1{2\eta}} .
		\end{equation}
		Hence, due to $a_m \leqslant 2m(k+1)^m$ and $\eta=O(\frac{1}{n})$, we have
		\[
			\sup_{1\leqslant m \leqslant n} \abs{ Y_m } \leqslant \frac{2C}{\eta^{3/2}} (k+1)^{\frac{T}{\eta}} \rme^{\frac1{2\eta}}.
		\]
		Besides, applying Lemma \ref{lem:bound of BM} again, 
		\[
			\mathbb{P}(B_k) \leqslant \frac{C }{\eta} \rme^{-\frac12 \left( \frac{k}{\eta} \rme^{1/(2\eta)} \right)^2} , \quad \forall \ k \geqslant 1.
		\]
		Due to similar argument \eqref{ineq:E_bound of A} for any $p\geqslant 1$, there  exists a constant  $C_{p}^{\prime\prime}$ not depending on $\eta$ such that 
		\begin{equation}
			\label{ineq:E_bound on B}
			\begin{aligned}
				\sum_{k=1}^{\infty} \bbE\left[ \sup_{0 \leqslant m \leqslant n} \! \abs{Y_m}^p \mathbbm{1}_{B_k}\right] \!\!
				\leqslant \! \frac{2^p C^{p+1}}{\eta^{(3p+2)/2}} \sum_{k=1}^{\infty} { (k+1)^{\frac{pC}{\eta}}} \rme^{\frac{p}{2\eta}} \cdot  
				\rme^{-\frac12 \left( \frac{k}{\eta} \rme^{1/(2\eta)} \right)^2}
				\leqslant \! C_{p}^{\prime\prime}.
			\end{aligned}
		\end{equation}	
		
		In sum, equations \eqref{ineq:E_bound of A} and \eqref{ineq:E_bound on B}yield that there exists a constant $C_p$ not depending on $\eta$ such that
		\[
			\mathbb{E} \left[\sup_{1 \leqslant m \leqslant n}|Y_m|^p  \mathbbm{1}_{A^c_0}\right] \leqslant 
			\sum_{k=1}^{k_0} \bbE\left[ \sup_{0 \leqslant m \leqslant n} \! \abs{Y_m}^p \mathbbm{1}_{A_k}\right]  + 
			\sum_{k=1}^{\infty} \bbE\left[ \sup_{0 \leqslant m \leqslant n} \! \abs{Y_m}^p \mathbbm{1}_{B_k}\right] 
			\leqslant C_p \ ,
		\]
		that is, \eqref{e:XlogX2} holds. And we complete the proof.
	\end{proof}

%
\section{EM scheme in the case of $\alpha \in (0,2)$}	\label{sec 3}	

		In this section, we prove Theorems \ref{thm 2} and \ref{thm 3}. 

		\begin{proof}[Proof of Theorem \ref{thm 2}] 
			\label{proof of thm 2}
			We only prove Part $(\runum{2})$ of the theorem in detail, since Part $(\runum{1})$ can be shown in a similar way 
			by replacing the estimates in Lemma \ref{Lemma 2.1} with the ones in Lemma \ref{Lemma 2.2}.
			
			Recall the formulations of \eqref{e:EM scheme}:
		\[
			\widetilde{Y}_{k+1} =\widetilde{Y}_{k} - \eta \widetilde{Y}_{k} \log \left( 1+ \big| \widetilde{Y}_{k} \big| \right) 
			+ \frac{1}{\sigma} \eta^{1/\alpha} \widetilde{Z}_{k+1} , \quad \widetilde{Y}_{0}:= x_0 ,
		\]
		where $\eta = \frac{T}{n}$ for all $k=,0,1,\dotsc,n-1$, and the definition of $K_2$ in the theorem:
			\[
			K_2 = \frac2T \left( \rme^{\kappa_\alpha/\beta} + \frac2\sigma  \right),
			\]
			for any fixed time $T > 0$, where $\kappa_\alpha$ is given in Lemma \ref{Lemma 2.1}. Due to $\eta=\frac{T}n \leqslant 1$,
			 it is easy to check that
			\begin{equation}
				\label{ine:K_2}
				\eta \cdot nK_2 - \frac{2(1+\eta)}{\sigma} \eta^{1/\alpha}  -1 
				= 2 \rme^{\kappa_\alpha/\beta} - 1  + \frac4\sigma ( 1 -  \eta^{1/\alpha})
				\geqslant \rme^{\kappa_\alpha/\beta}.
			\end{equation}
			We will apply this equation repeatedly.  Consider a sequence of events  $\widetilde{\Omega}_n \subseteq \Omega$, $n\in \mathbb{N}$, defined by
			\begin{equation}
				\begin{aligned}
					\widetilde{\Omega}_n := \Bigg\{ \omega \in \Omega \  \bigg| \ 
					&\big| \widetilde{Z}_k(\omega) \big| \in \left[ 1+\eta, 2+2\eta \right], \ \forall k = 2, \cdots, n\ ;\\
					&\quad \frac{\eta^{1/\alpha}}{\sigma}  \big|\widetilde{Z}_{1}(\omega) \big| \geqslant \abs{x_0}\big(1+\log(1+\abs{x_0}) \big) +  \exp\{nK_2\}
					\Bigg\}.
				\end{aligned}
			\end{equation}
			For all $\omega \in \widetilde{\Omega}_n$, we verify by induction that the following holds:
			\begin{equation}
				\label{claim1}
				\big| {\widetilde{Y}_m(\omega)} \big| \geqslant \exp\left\{ \frac{\kappa_\alpha}{\beta} (m-1) + n K_2 \right\}, \quad \forall\ m= 1,\dotsc,n.
			\end{equation}
			If $m=1$, 
			the triangle inequality yields that
			\begin{equation*}
				\begin{aligned}
					&\mathrel{\phantom{=}}
					\big|{\widetilde{Y}_1(\omega)}\big| = \big| { x_0 - x_0\log(1+\abs{x_0}) + \frac{\eta^{1/\alpha}}{\sigma} \widetilde{Z}_1(\omega) } \big| \\
					&\geqslant \frac{\eta^{1/\alpha}}{\sigma}  \big|{\widetilde{Z}_{1}(\omega)} \big|
					- \abs{ x_0 } \big( 1 +  \log( 1 + \abs{x_0} )  \big)
					\geqslant \exp\{ nK_2 \} .
				\end{aligned}			
			\end{equation*} 	
When $m=2$, we have
			\begin{align*}
				\big| {\widetilde{Y}_2(\omega)} \big|  &\geqslant \eta \big| {\widetilde{Y}_1(\omega)}\big|  \log\left( 1+\big| {\widetilde{Y}_1(\omega)} \big|  \right) - \frac{\eta^{1/\alpha}}{\sigma} \big| {\widetilde{Z}_2(\omega)} \big|  - \big| {\widetilde{Y}_1(\omega)} \big|  \\
				&\geqslant \big|  {\widetilde{Y}_1(\omega)} \big|  \cdot \left[ \eta \log \left( \big| {\widetilde{Y}_1(\omega)} \big| \right)  - \frac{\eta^{1/\alpha}}{\sigma}  \big| { \widetilde{Z}_2(\omega) } \big|  - 1 \right] \\ 
				&\geqslant 
				\exp\{ n K_2 \} \cdot \left[ \eta \cdot nK_2 - \frac{2(1+\eta)}{\sigma} \eta^{1/\alpha} -1 \right]
				\geqslant  \exp\left\{ \frac{\kappa_\alpha}{\beta} + nK_2 \right\} .
			\end{align*}             
			The second inequality is a consequence to $\big| \widetilde{Y}_1(\omega) \big| \geqslant 1$. And the third inequality arises from 
			$\big| \widetilde{Y}_1(\omega) \big| \geqslant \rme^{nK_2}$ and $\big| \widetilde{Z}_2(\omega) \big| \leqslant 2(1+\eta)$ for all 
			$\omega\in \widetilde{\Omega}_n$. The last inequality follows from  \eqref{ine:K_2}. Therefore, \eqref{claim1} holds for $m=2$. 
			
			For the induction step $k\to k+1$, we assume that \eqref{claim1} holds for $\ell=1,\cdots, k$. In particular, $\big| \widetilde{Y}_{\ell}(\omega) \big| 
			\geqslant \exp\{ nK_2 \} \geqslant 1$ for all $\ell=1,\cdots, k$ and all $\omega\in \widetilde{\Omega}_n$. Analogous with the case of $m=2$, 
			we have 
			\begin{align*}
				\big| {\widetilde{Y}_{k+1}(\omega)} \big| &\geqslant \eta \big| {\widetilde{Y}_k(\omega)} \big|  \log\left( 1+\big| {\widetilde{Y}_k(\omega)} \big|  \right) - \frac{\eta^{1/\alpha}}{\sigma} \big| {\widetilde{Z}_{k+1}(\omega)} \big|  - \big| {\widetilde{Y}_k(\omega)} \big|  \\
				&\geqslant \big| {\widetilde{Y}_k(\omega)} \big|  \cdot \left[ \eta \log \left( \big| {\widetilde{Y}_k(\omega)} \big|  \right)  -
				 \frac{\eta^{1/\alpha}}{\sigma}  \big| { \widetilde{Z}_{k+1}(\omega) } \big|  - 1 \right] \\ 
				&\geqslant 
				\exp\left\{ \frac{\kappa_\alpha}{\beta}(k-1) + n K_2 \right\} \cdot \left[ \eta \cdot nK_2 - \frac{2\eta^{1/\alpha}}{\sigma}(1+\eta) -1 \right] \\
				&\geqslant  \exp\left\{ \frac{\kappa_\alpha}{\beta}k + nK_2 \right\} .
			\end{align*}
			The third inequality arises from $\big| \widetilde{Y}_k(\omega) \big| \geqslant \exp\{\frac{\kappa_\alpha}{\beta}(k-1)+nK_2\}$ and 
			$\big| \widetilde{Z}_{k+1}(\omega) \big| \leqslant 2(1+\eta)$ for all $\omega\in \widetilde{\Omega}_n$. The last inequality follows from inequality 
			\eqref{ine:K_2}.
			Therefore, claim \eqref{claim1} holds. In particular, for all $ n \in \mathbb{N},$
			\begin{equation}
				\label{ine:bound of Y_n}
				\big| {\widetilde{Y}_{n}(\omega)} \big|  \geqslant \exp\left\{ (n-1) \frac{\kappa_\alpha}{\alpha} + nK_2 \right\} , \quad \forall\ \omega 
				\in \widetilde{\Omega}_n.			
			\end{equation}
			
			Next, we establish a lower bound for the probability of $\widetilde{\Omega}_n$ by using Lemma \ref{Lemma 2.1}. Firstly, we have
			\begin{equation*}
				\begin{aligned}
					&\mathrel{\phantom{=}}
					\mathbb{P} \left( \frac{ \eta^{\frac{1}{\alpha}} }{\sigma}  \abs{\widetilde{Z}_{1}(\omega)} \geqslant \abs{x_0}\big(1+\log(1+\abs{x_0})\big)  
					+ \exp\left\{ n K_2 \right\} \right)  \\
					&\geqslant \frac{\eta }{\sigma^\alpha} \cdot \Big( \abs{x_0}\big(1+\log(1+\abs{x_0}) \big) + \exp\left\{
					n K_2 \right\} \Big)^{-\alpha} \\
					&\geqslant \frac{T}{2\sigma^\alpha} \cdot \left( \frac{ \abs{x_0}^{\alpha}\big(1+\log(1+\abs{x_0}) \big)^{\alpha} }{ \exp\{ \alpha n K_2 \} } 
					+ 1 \right)^{-1} \cdot \frac1n \exp\{ -\alpha n K_2 \} ,
				\end{aligned}
			\end{equation*}
			where the last inequality comes from $(a+b)^{\alpha} \leqslant 2(a^\alpha+b^\alpha)$ for all $a, b >0$ and $\alpha \in (0,2)$. Besides, 
			Lemma \ref{Lemma 2.1} also yields that for all $k=2,\dotsc,n$, 
			\[
			\mathbb{P}\left( \big| { \widetilde{Z}_k(\omega) } \big|  \in \left[ 1+\eta, 2+2\eta \right] \right) \geqslant \frac1{\kappa_\alpha} \left( 1+\eta \right)^{-\alpha} .
			\]
			Thus, combining condition $\rme^{n K_2} \geqslant \abs{x_0} \big( 1 + \log(1+\abs{x_0}) \big)$ for sufficiently large $n$, 
			\begin{equation}\label{ine:low of P}
				\begin{aligned}
					\mathbb{P}\big(\widetilde{\Omega}_n\big)
					&\geqslant \frac{T}{ 4 \sigma^{\alpha}  } \cdot \frac1{n} \exp \left\{ -\alpha n K_2 \right\} \cdot \frac1{\kappa_\alpha^n}
					\left( 1+\frac{T}{n} \right)^{-\alpha n}
					\geqslant C \cdot \frac{\rme^{ -\alpha n K_2 } }{n \kappa_\alpha^n}
				\end{aligned}
			\end{equation}
			holds for some constant $C>0$ independent of $n$. Hence, combining equation \eqref{ine:bound of Y_n} with \eqref{ine:low of P} leads to
			\begin{equation*}
				\begin{aligned}
					\mathbb{E} \left[ \big| \widetilde{Y}_n \big|^{\beta}\right] &\geqslant  
					\mathbb{E} \left[ \big| \widetilde{Y}_n \big|^{\beta} \mathbbm{1}_{\widetilde{\Omega}_n} \right] 
					\geqslant \mathbb{P}\big(\widetilde{\Omega}_n\big) \cdot \exp\big\{ \kappa_\alpha(n-1)  + \beta n K_2 \big\} \\
					&\geqslant  \frac{C}{n} \cdot 
					\left( \frac{\exp\{\beta K_2 + \kappa_\alpha -\alpha K_2 \}}{  \kappa_{\alpha}  } \right)^{n}.
				\end{aligned}
			\end{equation*}
			Condition $K_2 < (\kappa_\alpha-\log \kappa_\alpha)/(\alpha - \beta)$ implies $\rme^{\beta K_2 + \kappa_\alpha - \alpha K_2} > \kappa_\alpha$.  
			Consequently, we can obtain that
			\begin{equation}
				\lim\limits_{n\to \infty}\mathbb{E}  \big|\widetilde{Y}_n\big|^{\beta}  = \infty, \quad \forall \ \beta \in (0, \alpha).
			\end{equation}
The proof is completed.
		\end{proof}

	Next, we prove Theorem \ref{thm 3}.
		 
		\begin{proof}[Proof of Theorem \ref{thm 3}]
		Again, we only provide a proof for Part (ii) and omit details for proving Part (i).
		Recall \eqref{e:general EM} and that that functions $f$ and $g$ satisfy Assumption \ref{assump_f_g}.	
			For all $n\in\mathbb{N}$, define $r_n$ as
			\begin{equation}\label{e:define r_N}
				\begin{aligned}
					r_n:=\max \Bigg\{ 2, H, & \left(\frac{4H }{\eta}+\frac{4 H^2}{\eta \sigma} \big(1+\eta \big) \eta^{1/\alpha}\right)^{\frac{1}{\gamma - \lambda}}, \\
				&\qquad \left( \sigma H \big(2+H \eta \big) \big(1+\eta\big)^{-1}\eta^{-\frac1\alpha} \right)^{\frac{1}{\gamma-\lambda}}
				 \Bigg\}\in [2,\infty) .
				\end{aligned}
			\end{equation}
			where $\eta = \frac{T}{n}$. The third term of the right hand side  ensures that 
			\begin{equation}\label{e:ineq of r_N}
				\frac{\eta}{2H} r_n^{\gamma-\lambda} 
				\geqslant 2+\frac{2H}{\sigma} \left(1+\eta\right)\eta^{\frac1\alpha},
			\end{equation}
			and the last term  ensures 
			\begin{equation}
				\label{e:ineq of r_N (0,1)}
				\frac{1}{\sigma H} ( 1+\eta ) \eta^{\frac1\alpha}  r_n^{\gamma - \lambda} \geqslant 2 + H \eta.
			\end{equation}
			Both equations will be used below. 
			
			 Since $g(x_0) \neq 0$, there exists a constant $M\geqslant 1$ such that $\abs{g(x_0) }\geqslant M^{-1}$ and $\abs{x_0}+T \abs{f(x_0)} \leqslant M$. 
			 And we consider the events $\widetilde{\Omega}_n \subseteq \Omega$ for all $n\in \mathbb{N}$ defined as
			\begin{equation}\label{e:definition of Omega_N}
				\begin{aligned}
					\widetilde{\Omega}_{n}:=\bigg\{ \omega\in \Omega \ \Big| \ &  \big|  \widetilde{Z}_k(\omega)\big| 
					\in \big[1+\eta , 2+2\eta \big],\forall k=2,\dotsc,n ;  \\
					& \quad \qquad \frac{\eta^{1/\alpha}}{\sigma}\big| \widetilde{Z}_1(\omega)\big| \geqslant M(r_n+M) 
					\bigg\} .
				\end{aligned}
			\end{equation}
			
			We claim that for every  $\omega\in \widetilde{\Omega}_n$ and $ m\in \left\{1,2,\cdots,n\right\}$,
			\begin{equation}\label{e:claim of Y_N}
				|\widetilde{Y}_m(\omega)| \geqslant r_n^{\lambda^{(m-1)}}. 
			\end{equation} 
			By induction, in the base case $m=1$, the triangle inequality leads to 
			\begin{align*}
				\big|\widetilde{Y}_1(\omega)\big|=&\big|x_0+ \eta f(x_0) +\frac{\eta^{1/\alpha}}{\sigma}  g(x) \widetilde{Z}_1(\omega)\big|\\
				\geqslant & \frac{\eta^{1/\alpha}}{\sigma}  \abs{g(x_0)} \big|{\widetilde{Z}_1(\omega)} \big| -|x_0|- \eta \abs{ f(x_0) }\\
				\geqslant &\frac{\eta^{1/\alpha}}{M  \sigma }  \big|{\widetilde{Z}_1 (\omega)}\big| - M \geqslant \frac{M(r_n+M)}{M}-M\geqslant r_n ,
			\end{align*}
			which follows from the definition \eqref{e:general EM} of $\widetilde{Y}_1$ and \eqref{e:definition of Omega_N} of $\widetilde{\Omega}_{n}$. 
			For the induction step $m\to m+1$, We assume that equation \eqref{e:claim of Y_N} holds for $k\in \left\{1,2,\cdots,m\right\}$. In particular, 
			we can obtain $|\widetilde{Y}_k(\omega)| \geqslant r_n \geqslant H\geqslant 1$. Additionally, the EM scheme \eqref{e:general EM} yields that
			\begin{equation}
			\label{ine1}
			\begin{aligned}
		 \big|\widetilde{Y}_{m+1}(\omega)\big|=&\Big|\widetilde{Y}_m(\omega)+ \eta  f\big(\widetilde{Y}_m(\omega)\big) +\frac{\eta^{1/\alpha}}{\sigma}  
		 g \big(\widetilde{Y}_m(\omega) \big) \widetilde{Z}_{m+1}(\omega)\Big|\\
			\geqslant & \Big| \eta f\big(\widetilde{Y}_m(\omega)\big) +\frac{\eta^{\frac1\alpha}}{ \sigma }  g \big(\widetilde{Y}_m(\omega)\big)
			 \widetilde{Z}_{m+1}(\omega)\Big|-\big|\widetilde{Y}_m(\omega)\big|\\
			\geqslant &\max\left\{ \eta \left|f\big(\widetilde{Y}_m(\omega)\big)\right|,\left|g\big(\widetilde{Y}_m(\omega)\big)\right|\left|\frac{\eta^{1/\alpha}}{ \sigma } 
			\widetilde{Z}_{m+1}(\omega)\right|\right\}\\
			&-\min\left\{ \eta \left|f\big(\widetilde{Y}_m(\omega)\big)\right|,\left|g\big(\widetilde{Y}_m(\omega)\big)\right|\left|\frac{\eta^{1/\alpha}}{ \sigma }  
			\widetilde{Z}_{m+1}(\omega)\right|\right\} 	-\big|\widetilde{Y}_m(\omega) \big|,
				\end{aligned}
			\end{equation}	
	where we have repeatedly used the triangle inequality. Since $\big| \widetilde{Z}_{m+1}(\omega) \big| \in [1+\eta, 2+2\eta]$ for all  
	$ \omega \in \widetilde{\Omega}_n$, we notice that
			\begin{equation}
			\label{max}
			\begin{aligned}
			&\mathrel{\phantom{\ge}}
			\max\left\{ \eta \left|f\big(\widetilde{Y}_m(\omega)\big)\right|,\left|g\big(\widetilde{Y}_m(\omega)\big)\right|\left|\frac{\eta^{1/\alpha}}{ \sigma } 
			\widetilde{Z}_{m+1}(\omega)\right|\right\} \\
			&\geqslant \max\left\{ \eta \left|f\big(\widetilde{Y}_m(\omega)\big)\right|, \frac{(1+\eta)\eta^{1/\alpha}}{ \sigma }
			\left|g\big(\widetilde{Y}_m(\omega)\big)\right| \right\},
				\end{aligned}
			\end{equation}	
			and	
			\begin{equation}
				\label{min}
				\begin{aligned}
			&\mathrel{\phantom{\ge}}
					\min\left\{ \eta \left|f\big(\widetilde{Y}_m(\omega)\big)\right|,\left|g\big(\widetilde{Y}_m(\omega)\big)\right|\left|\frac{\eta^{1/\alpha}}{ \sigma } 
					\widetilde{Z}_{m+1}(\omega)\right|\right\} \\
					&\leqslant \min\left\{ \eta \left|f\big(\widetilde{Y}_m(\omega)\big)\right|, \frac{2(1+\eta)\eta^{1/\alpha}}{ \sigma }
					\left|g\big(\widetilde{Y}_m(\omega)\big)\right| \right\},
				\end{aligned}
			\end{equation}	
			If $\alpha \in (1,2)$, we have $\frac\eta2 \leqslant \frac{1}{ \sigma } (1+\eta) \eta^{1/\alpha}$  for sufficiently large $n$. By the definition of $\sigma$, 
			we know that $\sigma \leqslant 2$ as $\alpha = 1$. Thus $\frac\eta2 \leqslant \frac1\sigma (1+\eta) \eta$ if $\alpha=1$. Consequently, it follows from
			 \eqref{ine1}, \eqref{max} and \eqref{min} that for $\alpha \in [1,2)$, 
	\begin{align*}
			\big|\widetilde{Y}_{m+1} (\omega)\big| 
			\geqslant & \frac\eta2 \max\left\{ \left| f\big(\widetilde{Y}_m(\omega) \big)\right|,\left| g \big(\widetilde{Y}_m(\omega) \big)\right| \right\} \\
			&-\frac{2(1+\eta) \eta^{1/\alpha}}{ \sigma } \min \left\{ \left| f\big(\widetilde{Y}_m(\omega) \big)\right|,\left| g \big(\widetilde{Y}_m(\omega) \big)\right| \right\} 
			-\big|\widetilde{Y}_m(\omega)\big|\\
			\geqslant & \frac{\eta}{2H} \big|\widetilde{Y}_m(\omega)\big|^{\gamma} -\frac{2H}{ \sigma } (1+\eta)\eta^{\frac1\alpha} 
			\big|\widetilde{Y}_m(\omega)\big|^{\lambda}-\big|\widetilde{Y}_m(\omega)\big|^{\lambda}\\
			= &  \big|\widetilde{Y}_m(\omega)\big|^{\lambda} \left[\frac{\eta}{2H} \big|\widetilde{Y}_m (\omega)\big|^{\gamma-\lambda} -
			\frac{2H}{ \sigma } (1+\eta)\eta^{\frac1\alpha}-1\right]\\
			\geqslant & \big|\widetilde{Y}_m(\omega)\big|^{\lambda} \left[\frac{\eta}{2H} \, r_n^{\gamma-\lambda}-\frac{2H}{ \sigma } (1+\eta)\eta^{\frac1\alpha}-1\right]\\
			\geqslant & \big|\widetilde{Y}_m(\omega)\big|^{\lambda},
	\end{align*}
where the first inequality comes from Assumption \ref{assump_f_g} and the last inequality follows from inequality \eqref{e:ineq of r_N}. 
On that other hand, in the case of $\alpha\in(0,1)$, we have $\frac{2}{ \sigma } (1+\eta) \eta^{1/\alpha} \leqslant \eta$ for $n$ large enough. 
Then, a similar argument leads to
			\[
				\begin{aligned}
					\big|\widetilde{Y}_{m+1} (\omega)\big| 
					&\geqslant \big|\widetilde{Y}_m(\omega)\big|^{\lambda} \left[\frac{(1+\eta)\eta^{1/\alpha}}{\sigma H } 
					r_n^{\gamma-\lambda}-H \eta-1\right] \geqslant \big|\widetilde{Y}_m(\omega)\big|^{\lambda} ,
				\end{aligned}
			\]
where the last inequality follows from \eqref{e:ineq of r_N (0,1)}. Hence, the induction hypothesis yields that for all $\alpha \in (0,2)$
			\begin{equation*}
				\big|\widetilde{Y}_{m+1}(\omega)\big| \geqslant
				\big|\widetilde{Y}_m(\omega)\big|^{\lambda} \geqslant
				\left(r_n^{ \lambda^{m-1} }\right)^{\lambda}= r_n^{\lambda^m}.
			\end{equation*}
This proves the claim (\ref{e:claim of Y_N}). In particular, since $r_n \geqslant 2$, we obtain
	\begin{equation}\label{e:lower bound of Y_N}
				\big|\widetilde{Y}_n(\omega)\big| \geqslant  r_n^{ \lambda^{n-1} } 
				\geqslant 2^{ \lambda^{n-1}} , \quad \forall \ \omega\in \widetilde{\Omega}_n, \ \text{and}\  n\in\mathbb{N} 
			\end{equation}
						
Furthermore, by applying Lemma \ref{Lemma 2.1}, we derive the following lower bound for the probability of $\widetilde{\Omega}_n$,
			\begin{equation*}
				\begin{aligned}
					\mathbb{P} (\widetilde{\Omega}_n) =&\mathbb{P}\left( \left|\frac{\eta^{1/\alpha}}{ \sigma } \widetilde{Z}_1\right| 
					\geqslant M (r_n+M)\right) \prod_{k=2}^{n}
					\mathbb{P} \left( \left|\widetilde{Z}_{k}\right| \in \big[1+\eta,2+2\eta \big]\right)\\
					\geqslant &  \mathbb{P} \left( \big|\widetilde{Z}_1\big| \geqslant \frac{ \sigma  M(r_n+M)}{\eta^{1/\alpha}}\right) \!\! 
					\left[\mathbb{P} \left( \big|\widetilde{Z}_{1}\big| \in \big[1+\eta,2+2\eta\big]\right)\right]^{n} \\
					\geqslant & 
					\frac{T}{n \kappa_\alpha^n} \left[\frac{1}{ \sigma  M(r_n+M)}\right]^{\alpha} \left(1+\frac{T}{n}\right)^{-\alpha n}.
				\end{aligned}
			\end{equation*}
Thus, there exists a constant $c\in(1,\infty)$ such that
			\begin{equation}\label{e:lower bound of P(Omega_N)}
				\mathbb{P} (\widetilde{\Omega}_n) \geqslant cn^{-c} \kappa_\alpha^{-n}
			\end{equation}
			for all sufficiently large $n$. 
			
Combining equations (\ref{e:lower bound of Y_N}) with (\ref{e:lower bound of P(Omega_N)}) gives that for any $\beta\in(0,\alpha)$
			\begin{equation}
				\begin{aligned}
					\lim\limits_{n\to \infty}\mathbb{E} \left[ \big|\widetilde{Y}_n\big|^{\beta}\right] 
					&\geqslant \lim\limits_{n\to \infty} \mathbb{E} \left[ \big|\widetilde{Y}_n\big|^{\beta} \mathbbm{1}_{\widetilde{\Omega}_n} \right] 
					\geqslant 
					\lim\limits_{n\to \infty} \left[\mathbb{P}[\widetilde{\Omega}_n] \, r_n^{\beta \lambda^{(n-1)}}\right]\\
					& \geqslant \lim\limits_{n\to\infty} \big(cn^{-c} \kappa_\alpha^{-n} \big) \cdot 2^{\beta \lambda^{n-1}}=\infty.
				\end{aligned}
			\end{equation}
			
	For proving Part $(\runum{1})$ of Theorem \ref{thm 3}, we can employ a similar approach to the EM scheme . 
{Define the parameter $r_n$ as in  \eqref{e:define r_N} but without including the term $\sigma$.}
	We consider a sequence of events $\Omega_n$ defined by 
	\[
	\begin{aligned}
	{\Omega}_{n}:=\bigg\{ \omega\in \Omega \ \Big| \ &  \big|  {Z}_k(\omega)\big| \in \big[1+\eta , 2+2\eta \big],\forall k=2,\dotsc,n ;  \\
					& \quad \qquad {\eta^{1/\alpha}}\big| {Z}_1(\omega)\big| \geqslant M(r_n+M) 
					\bigg\} .
				\end{aligned}
			\]
	Then, an argument similar to that for Part (ii) and Lemma \ref{Lemma 2.2} yield the desired conclusion. The proof is completed. 
		\end{proof}
		
\section{Simulations }
\label{sec:simulation}
	
	In this section, we present some numerical simulations that illustrate the convergence and divergence of EM scheme for $d=1$.
	
	{Firstly, we consider SDE \eqref{eq:critical sde} with $\alpha =2$ and the corresponding EM scheme \eqref{EM scheme BM}. We set the time interval $t\in[0,T]$ with $T = 10$ and $100$. For both time intervals, we take initial values $Y_0$ of $1$, $5$ and $10$ respectively. And set step size $\eta$ to $0.001$ as $T=10$, and to $0.01$ as $T=100$. Besides, we set $n = \frac{T}{\eta} = 10\,000$ for each case. More precisely, Figures \ref{figure 1} and \ref{figure 2} below illustrate the simulations of the EM scheme 
	for the second absolute moment $\bbE \abs{Y_k}^2$ over the range $0\leqslant k \leqslant n$ with iteration steps $n = 10000$ and initial values of $Y_0 = 1$, $5$, and $10$. In each figure, the blue line corresponds to $Y_0 = 1$, the green to $Y_0 = 5$, and the red to $Y_0 = 10 $.}
	
	
	\begin{figure}[htbp]
		\centering
		\includegraphics[width=1.0\linewidth]{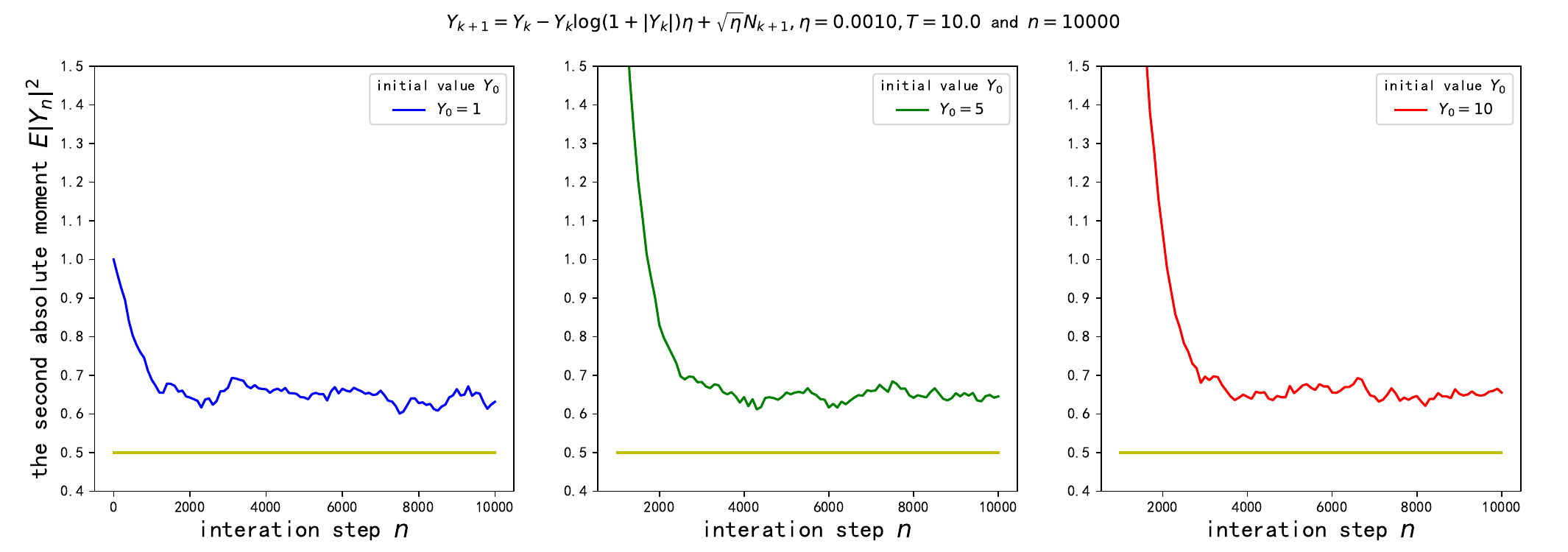}
		\caption{As $T = 10$, simulations values of the second absolute moment $\bbE\abs{Y_k}^2$ for the EM scheme \eqref{EM scheme BM} with initial $Y_0 = 1, 5, 10$, $\eta = 0.001 $ and iteration steps $n = 10\,000$, $0\leqslant k \leqslant n$. } \label{figure 1}
	\end{figure}
	\begin{figure}[htbp]
		\centering
		\includegraphics[width=1.0\linewidth]{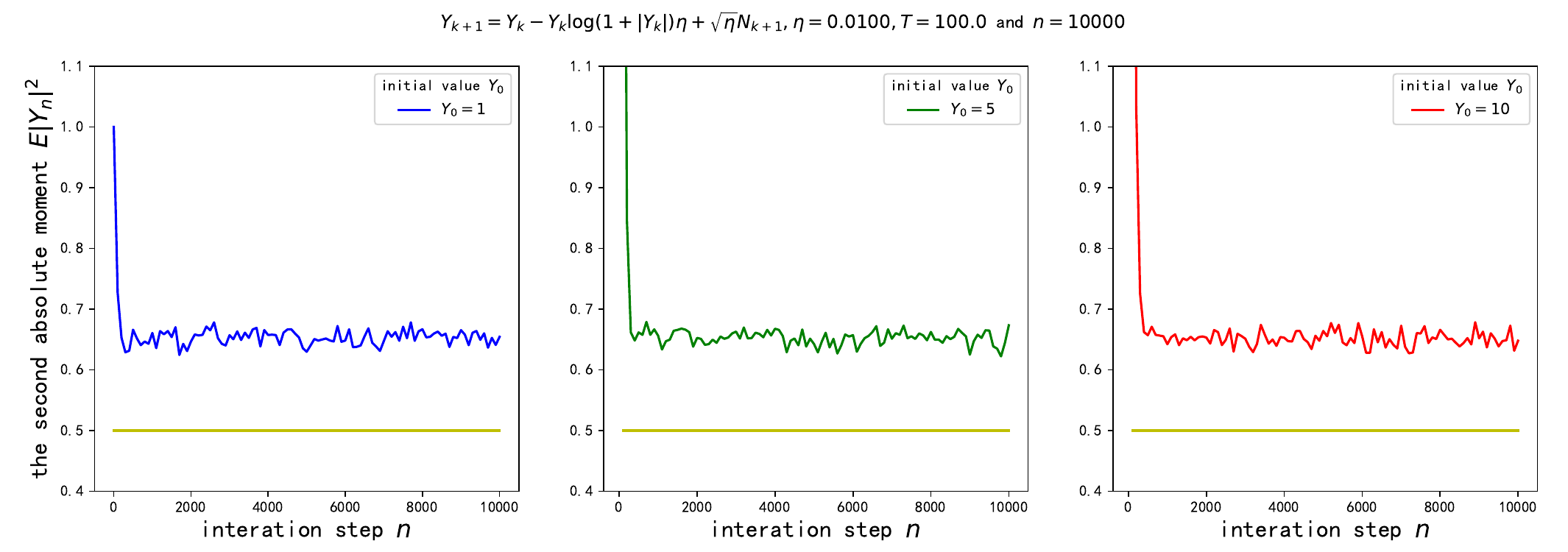}
		\caption{As $T = 100$, simulations values of the second absolute moment $\bbE\abs{Y_k}^2$ for the EM scheme \eqref{EM scheme BM} with initial $Y_0 = 1, 5, 10$, $\eta = 0. 01 $ and iteration steps $n = 10\,000$, $0\leqslant k \leqslant n$. } \label{figure 2}
	\end{figure}

	{Figures \ref{figure 1} and \ref{figure 2} indicate that $\{\bbE \abs{Y_k}^2, k \leqslant n\}$ are bounded and has a clear decreasing trend with respect to $0\leqslant k \leqslant n$ for each initial value and step size.}
	
	For SDE \eqref{eq:critical sde} with $\alpha \in (0,2)$ and the corresponding EM scheme \eqref{e:EM scheme}, 
	due to the condition of $K_2$ in the proof of  Theorem \ref{thm 2}, we choose $T=100$ here. 
	And we consider three cases, that is, $\alpha = 0.5, 1.0$, and $1.5$. For each case, we let $\beta$ be $\frac{\alpha}{8}$, $\frac{\alpha}{4}$ and $\frac{\alpha}{2}$. 
	The simulated values of the $\beta$-th moment of $\widetilde{Y}_n$ in these cases are listed in Tables \ref{table alpha 0.5}, \ref{table alpha 1.0}, and \ref{table alpha 1.5}.
		
		\begin{table}[thp]
			\begin{center}
		\caption{Simulated values of the absolute moment for the EM scheme \eqref{e:EM scheme} with $T=100$, $\alpha = 0.50$ and  $n=\{100,105,110,\dotsc,145\}$. }
		\label{table alpha 0.5}
				\begin{tabular}{p{2.0cm}<{\centering}| p{3cm}<{\centering}| p{3cm}<{\centering}| p{3cm}<{\centering}  } 
				\toprule 
	\multirow{2}{*}{$n$} & 
	\multicolumn{1}{c |}{ \multirow{2}{*}{$\bbE|{\widetilde{Y}_n}|^{{\alpha}/{8}}$} } & 
	\multicolumn{1}{c |}{ \multirow{2}{*}{$\bbE|{\widetilde{Y}_n}|^{{\alpha}/{4}}$}} & \multicolumn{1}{c }{ \multirow{2}{*}{$\bbE|{\widetilde{Y}_n}|^{{\alpha}/{2}}$} }  \\	
	\multirow{1}{*}{} & 
	\multicolumn{1}{c |}{ \multirow{1}{*}{} } & 
	\multicolumn{1}{c |}{ \multirow{1}{*}{} } & 
	\multicolumn{1}{c }{ \multirow{1}{*}{} }  \\
				\midrule 
					 $100$ & $1.8 \times 10^{13}$ & $6.2 \times 10^{26}$ & $3.8 \times 10^{55}$   \\
					 $105$ & $6.5 \times 10^{13}$ & $1.3 \times 10^{28}$ & $4.7 \times 10^{57}$   \\
					 $110$ & $2.8 \times 10^{14}$ & $3.0 \times 10^{29}$ & $3.6 \times 10^{60}$   \\
					 $115$ & $1.2 \times 10^{15}$ & $5.4 \times 10^{30}$ & $1.6 \times 10^{63}$   \\
					 $120$ & $5.5 \times 10^{15}$ & $1.3 \times 10^{32}$ & $4.3 \times 10^{65}$   \\
					 $125$ & $2.4 \times 10^{16}$ & $2.1 \times 10^{33}$ & $1.7 \times 10^{68}$  \\
					 $130$ & $1.1 \times 10^{17}$ & $4.5 \times 10^{34}$ & $1.3 \times 10^{71}$  \\
					 $135$ & $1.5 \times 10^{18}$ & $\infty$ & $\infty$  \\
					 $140$ & $\infty$ & $\infty$ & $\infty$   \\
					 $145$ & $\infty$ & $\infty$ & $\infty$  \\			 
				\bottomrule 
				\end{tabular}
			\end{center}
		\end{table}
		
		\begin{table}[thp]
			\begin{center}
				\caption{Simulated values of the absolute moment for the EM scheme \eqref{e:EM scheme} with $T=100$, $\alpha = 1.0$ and  $n=\{100,105,110,\dotsc,145\}$. }
				\label{table alpha 1.0}
				\begin{tabular}{p{2.0cm}<{\centering}| p{3cm}<{\centering}| p{3cm}<{\centering}| p{3cm}<{\centering}  } 
					\toprule 
					\multirow{2}{*}{$n$} & 
					\multicolumn{1}{c |}{ \multirow{2}{*}{$\bbE|{\widetilde{Y}_n}|^{{\alpha}/{8}}$} } & 
					\multicolumn{1}{c |}{ \multirow{2}{*}{$\bbE|{\widetilde{Y}_n}|^{{\alpha}/{4}}$}} & \multicolumn{1}{c }{ \multirow{2}{*}{$\bbE|{\widetilde{Y}_n}|^{{\alpha}/{2}}$} }  \\	
					\multirow{1}{*}{} & 
					\multicolumn{1}{c |}{ \multirow{1}{*}{} } & 
					\multicolumn{1}{c |}{ \multirow{1}{*}{} } & 
					\multicolumn{1}{c }{ \multirow{1}{*}{} }  \\
					\midrule 
					$100$ & $3.8 \times 10^{25}$ & $7.1 \times 10^{52}$ & $2.2 \times 10^{109}$   \\
					$105$ & $7.7 \times 10^{26}$ & $1.5 \times 10^{55}$ & $2.3 \times 10^{112}$   \\
					$110$ & $1.3 \times 10^{28}$ & $1.1 \times 10^{58}$ & $5,4 \times 10^{117}$   \\
					$115$ & $2.7 \times 10^{29}$ & $2.2 \times 10^{60}$ & $2.7 \times 10^{124}$   \\
					$120$ & $4.6 \times 10^{30}$ & $1.3 \times 10^{63}$ & $6.3 \times 10^{128}$   \\
					$125$ & $9.2 \times 10^{31}$ & $3.9 \times 10^{65}$ & $1.2 \times 10^{135}$  \\
					$130$ & $1.7 \times 10^{33}$ & $2.9 \times 10^{68}$ & $2.1 \times 10^{141}$  \\
					$135$ & $2.9 \times 10^{34}$ & $3.7 \times 10^{71}$ & $1.9 \times 10^{145}$  \\
					$140$ & $5.9 \times 10^{35}$ & $1.8 \times 10^{73}$ & $\infty$   \\
					$145$ & $\infty$ & $\infty$ & $\infty$  \\			 
					\bottomrule 
				\end{tabular}
			\end{center}
		\end{table}
		
		\begin{table}[thp]
			\begin{center}
				\caption{ Simulated values of the absolute moment for the EM scheme \eqref{e:EM scheme} with $T=100$, $\alpha = 1.5$ 
				and  $n=\{100,105,110,\dotsc,145\}$. }
				\label{table alpha 1.5}
				\begin{tabular}{p{2.0cm}<{\centering}| p{3cm}<{\centering}| p{3cm}<{\centering}| p{3cm}<{\centering}  } 
					\toprule 
					\multirow{2}{*}{$n$} & 
					\multicolumn{1}{c |}{ \multirow{2}{*}{$\bbE|{\widetilde{Y}_n}|^{{\alpha}/{8}}$} } & 
					\multicolumn{1}{c |}{ \multirow{2}{*}{$\bbE|{\widetilde{Y}_n}|^{{\alpha}/{4}}$}} & \multicolumn{1}{c }{ \multirow{2}{*}{$\bbE|{\widetilde{Y}_n}|^{{\alpha}/{2}}$} }  \\	
					\multirow{1}{*}{} & 
					\multicolumn{1}{c |}{ \multirow{1}{*}{} } & 
					\multicolumn{1}{c |}{ \multirow{1}{*}{} } & 
					\multicolumn{1}{c }{ \multirow{1}{*}{} }  \\
					\midrule 
					$100$ & $8.7 \times 10^{37}$ & $8.4 \times 10^{77}$ & $7.0 \times 10^{158}$   \\
					$105$ & $5.8 \times 10^{39}$ & $5.6 \times 10^{83}$ & $7.8 \times 10^{168}$   \\
					$110$ & $3.9 \times 10^{41}$ & $3.7 \times 10^{86}$ & $6.9 \times 10^{174}$   \\
					$115$ & $2.7 \times 10^{43}$ & $1.4 \times 10^{90}$ & $2.1 \times 10^{182}$   \\
					$120$ & $2.9 \times 10^{45}$ & $6.0 \times 10^{93}$ & $4.6 \times 10^{192}$   \\
					$125$ & $1.9 \times 10^{47}$ & $2.4 \times 10^{97}$ & $1.8 \times 10^{200}$  \\
					$130$ & $3.3 \times 10^{49}$ & $4.8 \times 10^{101}$ & $8.7 \times 10^{207}$  \\
					$135$ & $4.7 \times 10^{51}$ & $7.2 \times 10^{104}$ & $7.1 \times 10^{215}$  \\
					$140$ & $2.9 \times 10^{53}$ & $2.5 \times 10^{111}$ & $9.2 \times 10^{219}$   \\
					$145$ & $\infty$ & $\infty$ & $\infty$  \\			 
					\bottomrule 
				\end{tabular}
			\end{center}
		\end{table}
		
		From Tables \ref{table alpha 0.5}, \ref{table alpha 1.0}, and \ref{table alpha 1.5}, we observe that the simulated values of the 
		absolute moments of $\widetilde{Y}_n$ increase with respect to number of iteration $n$.  In addition, they suggest that,
		for smaller values of $\alpha \in (0,1)$,  lager values of time $T$ should be chosen 
		due to constant $\kappa_\alpha = (2^\alpha /(2^\alpha-1))$ in Lemma \ref{Lemma 2.1}. Consequently, these tables indicate that
		the behavior of the absolute $\frac{\alpha}{2}$-th moment varies with respected to $\alpha$ if we choose a fixed time $T$.
	
	\bigskip
		
\section{Appendix: The existence and uniqueness of strong solution}
\label{appendix}

Since the diffusion coefficient function $f(x) = -x \log(1 + |x|)$ in SDE \eqref{eq:critical sde} is only locally Lipschitz 
and satisfies the local growth condition in  \cite{MR2512800}, Theorem 6.2.11 in \cite{MR2512800} implies that 
\eqref{eq:critical sde} has a unique local solution.
The following theorem strengthens this result by proving the existence and uniqueness of strong global solution 
of SDE \eqref{eq:critical sde}.   	
\begin{theorem}
		\label{Thm 1 solution}
		SDE \eqref{eq:critical sde} has an unique strong global solution. Moreover, if $\alpha=2$, we have 
		$$\mathbb E |X_t|^2<\infty, \quad \forall \  t>0;$$
		if $\alpha \in (0,2)$, then for every $\beta \in (0,\alpha)$, 
		$$\mathbb E |X_t|^{\beta}<\infty, \quad \forall \  t>0.$$ 
	\end{theorem}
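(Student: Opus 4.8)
The plan is to promote the local solution to a global one via a Foster--Lyapunov drift condition that simultaneously excludes explosion and delivers the moment bounds. Theorem~6.2.11 of \cite{MR2512800} already yields a strong solution $(X_t)$, unique up to the explosion time $\tau_\infty = \lim_{R\to\infty}\tau_R$ with $\tau_R := \inf\{t\geqslant 0 : \abs{X_t}\geqslant R\}$; by local pathwise uniqueness it then suffices to show $\tau_\infty = \infty$ almost surely, and the moment estimates will emerge from the same computation. Writing $\mathcal{L}$ for the generator of \eqref{eq:critical sde}, the core of the argument is to exhibit a nonnegative $V\in C^2(\bbR^d)$ and constants $c>0$, $C<\infty$ with
\[
\mathcal{L}V(x)\leqslant -c\,V(x)+C,\qquad x\in\bbR^d .
\]

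For $\alpha=2$ I would take $V(x)=\abs{x}^2$, so that $\mathcal{L}V(x)=-2\abs{x}^2\log(1+\abs{x})+d$. Since $\log(1+\abs{x})\geqslant 1$ once $\abs{x}\geqslant \rme-1$, one has $-2\abs{x}^2\log(1+\abs{x})\leqslant -2\abs{x}^2+2(\rme-1)^2$ for every $x$, which is exactly the drift condition with $V=\abs{x}^2$. For $\alpha\in(0,2)$ I would instead use $V(x)=(1+\abs{x}^2)^{\beta/2}$ for a fixed $\beta\in(0,\alpha)$, with $\nabla V(x)=\beta(1+\abs{x}^2)^{\beta/2-1}x$ and $\abs{D^2V(x)}\lesssim (1+\abs{x}^2)^{\beta/2-1}$. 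The local part of $\mathcal{L}V$ is $-\beta(1+\abs{x}^2)^{\beta/2-1}\abs{x}^2\log(1+\abs{x})$, which for large $\abs{x}$ dominates like $-\beta\abs{x}^\beta\log(1+\abs{x})$ and in particular lies below $-\tfrac{\beta}{2}V(x)$.

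The crucial step is to bound the nonlocal part
\[
\mathcal{L}_{\mathrm{jump}}V(x)=\int_{\bbR^d}\big[V(x+z)-V(x)-\langle\nabla V(x),z\rangle\mathbbm{1}_{\{\abs{z}<1\}}\big]\,\nu(\dd z)
\]
uniformly in $x$. I would split the integral at $\abs{z}=1$. On $\{\abs{z}<1\}$ a second-order Taylor estimate bounds the integrand by $\tfrac12\abs{z}^2\sup_\xi\abs{D^2V(\xi)}\lesssim \abs{z}^2(1+\abs{x}^2)^{\beta/2-1}$, and $\int_{\abs{z}<1}\abs{z}^2\,\nu(\dd z)<\infty$ because $\alpha<2$; on $\{\abs{z}\geqslant1\}$ the elementary inequality $(1+\abs{x+z}^2)^{\beta/2}\lesssim V(x)+\abs{z}^\beta$ gives $V(x+z)-V(x)\lesssim \abs{z}^\beta$, and $\int_{\abs{z}\geqslant1}\abs{z}^\beta\,\nu(\dd z)=C_{d,\alpha}\,s_{d-1}\int_1^\infty r^{\beta-\alpha-1}\dd r<\infty$ precisely because $\beta<\alpha$. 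Hence $\mathcal{L}_{\mathrm{jump}}V$ is bounded, and combining with the local part yields the drift condition $\mathcal{L}V(x)\leqslant -c\,V(x)+C$.

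With the drift condition in hand the conclusion is standard. Dynkin's formula up to $t\wedge\tau_R$ gives $\bbE V(X_{t\wedge\tau_R})=V(x_0)+\bbE\int_0^{t\wedge\tau_R}\mathcal{L}V(X_s)\,\dd s\leqslant V(x_0)+Ct$, a bound uniform in $R$; since $V$ evaluated on the sphere $\{\abs{x}=R\}$ tends to $\infty$, this forces $\mathbb{P}(\tau_R\leqslant t)\to 0$, whence $\tau_\infty=\infty$ almost surely and the solution is global and unique. Applying the same formula to $\rme^{ct}V(X_{t\wedge\tau_R})$ and letting $R\to\infty$ by Fatou then yields $\bbE V(X_t)\leqslant \rme^{-ct}V(x_0)+C/c<\infty$, which is $\bbE\abs{X_t}^2$ when $\alpha=2$ and dominates $\bbE\abs{X_t}^\beta$ for every $\beta\in(0,\alpha)$ when $\alpha\in(0,2)$. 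The main obstacle is the uniform boundedness of $\mathcal{L}_{\mathrm{jump}}V$: it is exactly the large-jump integrability $\int_{\abs{z}\geqslant1}\abs{z}^\beta\,\nu(\dd z)<\infty$ that confines us to $\beta<\alpha$ and clarifies why, in the heavy-tailed regime, no moment of order $\geqslant\alpha$ can survive.
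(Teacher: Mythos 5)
Your overall strategy is essentially the paper's: the same Lyapunov function $V_\beta(x)=(1+\abs{x}^2)^{\beta/2}$, the same splitting of the jump part of the generator at $\abs{z}=1$ (second-order Taylor for small jumps, a moment-type bound for large jumps, with integrability exactly because $\beta<\alpha$), and the same Gronwall/Markov localization. The only organizational difference is that you stop the local solution before explosion, while the paper truncates the drift to get globally Lipschitz coefficients and then shows the exit times of the truncated solutions diverge; these are interchangeable here. Your explicit treatment of $\alpha=2$ with $V(x)=\abs{x}^2$ is correct (the paper only cites Cerrai's argument for that case).

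There is, however, a genuine gap in what you yourself call the crucial step, and it occurs precisely in the range needed to prove the theorem in full when $\alpha\in(1,2)$. Your claim that $V(x+z)-V(x)\lesssim\abs{z}^\beta$ uniformly in $x$ on $\{\abs{z}\geqslant1\}$, and hence that $\mathcal{L}_{\mathrm{jump}}V$ is bounded, is false for $\beta>1$: take $\abs{z}=1$ and $x=tz$; then $V(x+z)-V(x)\sim\beta t^{\beta-1}\to\infty$ as $t\to\infty$, while $\abs{z}^\beta=1$. (The claim is true for $\beta\leqslant1$, since $x\mapsto(1+\abs{x}^2)^{\beta/2}$ is then globally $\beta$-H\"older; that suffices for non-explosion, but the theorem asserts $\bbE\abs{X_t}^\beta<\infty$ for \emph{every} $\beta\in(0,\alpha)$, so for $\alpha\in(1,2)$ you must handle $\beta\in(1,\alpha)$.) Note also that the elementary inequality you invoke, $(1+\abs{x+z}^2)^{\beta/2}\leqslant 2^{\beta/2}\big(V(x)+\abs{z}^\beta\big)$, does not yield your conclusion: it only gives $V(x+z)-V(x)\leqslant(2^{\beta/2}-1)V(x)+2^{\beta/2}\abs{z}^\beta$, so after integration against $\nu$ you obtain $\mathcal{L}_{\mathrm{jump}}V(x)\leqslant \epsilon V(x)+C$ with a fixed, not small, constant $\epsilon$, rather than a bounded quantity. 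The paper circumvents this for $\beta\in(1,\alpha)$ by a first-order Taylor bound, $V_\beta(x+z)-V_\beta(x)\leqslant\beta\big(\abs{x}^{\beta-1}\abs{z}+\abs{z}^\beta\big)$, which after integration over $\{\abs{z}\geqslant 1\}$ produces a term of order $\abs{x}^{\beta-1}=o\big(V_\beta(x)\big)$ that is absorbed into the dissipative part via Young's/H\"older's inequality. Your argument is repairable along the same lines, or alternatively by observing that the drift contribution satisfies $\inprod{\nabla V_\beta(x),f(x)}\leqslant-\tfrac{\beta}{2}V_\beta(x)\log(1+\abs{x})$ for $\abs{x}\geqslant1$, whose unbounded logarithmic factor swallows any fixed multiple $\epsilon V_\beta(x)$ outside a compact set; but as written, the assertion that $\mathcal{L}_{\mathrm{jump}}V$ is bounded, and the drift condition you derive from it, are unjustified for $\beta\in(1,\alpha)$.
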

	
By adopting the argument in \cite[Section 1.6]{cerrai2001second}, we can show the theorem for the case of $\alpha=2$. 
This argument can be extended to prove the theorem for the case of $\alpha \in (0,2)$. However, we have not been 
able to find a proof in the literature. For completeness, we provide a proof of Theorem \ref{Thm 1 solution}. 
	
	
Since the coefficient function $f(x) = -x\log(1+\abs{x})$ is a local Lipschitz function,
For every $n \in \mathbb{N}_+$, we define the following truncated function $f_n(x)$ on $\bbR^d$ by
\[
		f_n(x) = \begin{cases}
			-x\log(1+\abs{x}),\ \  \ \ &\abs{x} \leqslant n ; \\
			\ -x\log(1+n),  &\abs{x}>n.
		\end{cases}
\]
	
\begin{proof}
By the definition of $f_n(x)$, it can been verified that $f_n(x)$ is a global Lipschitz  function with linear growth. Hence, 
\cite[Theorem 6.2.3]{MR2512800} implies that the SDE
		\begin{equation}
			\label{eq:trant. sde}
			\dd X_{n,t} = f_n(X_{n,t}) \dd t + \dd L_t
		\end{equation}
		has a unique strong solution $F_n$, and 
		\[
		X_n(t, x, \omega) := F_n(x, \omega) (t).
		\]
		Define a stopping time $\tau_n$ as
		\[
		\tau_n = \inf \left\{ t\geqslant 0: \abs{ X_{n,t} } \geqslant n   \right\}, \quad n\geqslant 2.
		\]
By the definition of $f_n(x)$, we have
		\[
		\begin{aligned}
		\inprod{x, f_n(x)} &= -\abs{x}^2 \log(1+\abs{x}) \mathbbm{1}_{[0,n]}(\abs{x}) - \log(1+n) \abs{x}^2 \mathbbm{1}_{(n,\infty)}(\abs{x}) \\
		&= -\abs{x}^2 \log(1+\abs{x}) \big( \mathbbm{1}_{[0,\rme-1]}(\abs{x}) +  \mathbbm{1}_{(\rme-1],n}(\abs{x}) \big) - \abs{x}^2 \mathbbm{1}_{(n,\infty)}(\abs{x}) \\ 
		&\leqslant -\abs{x}^2 + 1.
		\end{aligned}
		\]
		
In the case of $\alpha \in (0,2)$, we define function $V(x): \bbR^d \to \bbR$ as 
		\[
			V_{\beta}(x) = \left( 1+\abs{x}^2 \right)^{\beta/2}, \quad \beta \in (0,\alpha).
		\]
		Then we have 
		\[
			\begin{aligned}
				\nabla V_{\beta}(x) &= \frac{\beta x}{ ( 1+\abs{x}^2 )^{1-\beta/2 } }\ , \\
			\nabla^2 V_{\beta}(x) &= \frac{ \beta\ {I}_d }{ (1+\abs{x}^2)^{1-\beta/2} } + 
			\frac{ \beta (\beta-2) x x^{\prime} }{ (1+\abs{x}^2)^{2-\beta/2} }\ ,
			\end{aligned}
		\]
where $I_d$ is the identity matrix in $\bbR^{d\times d}$. Hence,  for all $x \in \bbR^d$, we have 
$\abs{x}^{\beta} \leqslant V_{\beta}(x) \leqslant 1+\abs{x}^{\beta}$ and 
		\[
			\abs{ \nabla V_{\beta}(x) } \leqslant \beta \abs{x}^{\beta-1}, 
			\quad
			\hsnorm{\nabla^2 V_{\beta}(x)} \leqslant \beta(3-\beta)\sqrt{d}.
		\]
	Besides, the following also holds for all $x\in\bbR^d$
		\[
			\inprod{\nabla V_{\beta}(x), f_n(x)} = \frac{\beta \inprod{x,f_n(x)}}{(1+\abs{x}^2)^{1-\beta/2}} \leqslant -\beta V_{\beta}(x) + 2\beta.
		\]
	 It\^o's formula yields that
			\begin{equation} \label{eq:ito of stable}
				\begin{aligned}
				V_{\beta}(X_{n,t}) &= V_{\beta}(x) + \int_0^t \inprod{ \nabla V_{\beta}(X_{n,s}) , f_n(X_{n,s}) } \dd s  \\
				+ & \int_0^t \int_{\abs{z}<1} \left[ V_{\beta}(X_{n,s}+z) - V_{\beta}(X_{n,s}) \right] \widetilde{P}(\dd s , \dd z) \\
				+ &  \int_0^t \int_{\abs{z}\geqslant1} \left[ V_{\beta}(X_{n,s}+z) - V_{\beta}(X_{n,s}) \right] P(\dd s , \dd z) \\
				  + &\int_0^t \int_{\abs{z}<1}\left[  V_{\beta}(X_{n,s}+z) - V_{\beta}(X_{n,s}) - \inprod{\nabla V_{\beta}(X_{n,s}), z} \right] \frac{C_{d,\alpha}\dd z \dd s}{\abs{z}^{d+\alpha}} .
			\end{aligned}
			\end{equation}
		
Before estimating $\bbE V_{\beta}(X_{n,t})$, we compute the following. If $\alpha \in (1,2)$, we let $\beta \in (1,\alpha)$, 
then we have that for any $x\in\bbR^d$
		\[
		\begin{aligned}
		&\pheq
		\int_{\bbR^d \backslash \{0\}} \left[ V_{\beta}(x+z) - V_{\beta}(x) - \inprod{ \nabla V_{\beta}(x), z } \mathbbm{1}_{(0,1)}(\abs{z}) \right] \frac{\dd z}{\abs{z}^{d+\alpha}} \\ 
		&= \int_{\abs{z}\geqslant1} \int_0^1 \inprod{ \nabla V_{\beta}(x+sz),z } \frac{\dd s \dd z}{\abs{z}^{d+\alpha}}  
				+  \int_{\abs{z}<1} \int_0^1 \int_0^s \hsprod{ \nabla^2 V_{\beta}(x+uz) , z z^{\prime} } \frac{\dd u \dd s \dd z}{\abs{z}^{d+\alpha}} \\
				&\leqslant  \beta \int_{\abs{z}\geqslant1} ( \abs{x}^{\beta-1}\abs{z} + \abs{z}^{\beta} ) \frac{ \dd z}{\abs{z}^{d+\alpha}} 
				+ \beta(3-\beta) \sqrt{d} \int_{\abs{z}<1} \abs{z}^2  \frac{ \dd z}{\abs{z}^{d+\alpha}} \\
				&= \beta s_{d-1} \left( \frac{\abs{x}^{\beta-1}}{\alpha-1} + \frac1{\alpha-\beta} \right) + \frac{\beta(3-\beta)s_{d-1}\sqrt{d}}{2(2-\alpha)} .
		\end{aligned}
		\]
On the other hand, for $\alpha \in(0,1]$, we use the inequality $(a+b)^{\beta} \leqslant a^\beta + b^{\beta}$, where $\beta \in (0,1)$ and $a, b>0$, to derive
		\[
			\begin{aligned}
				&\pheq
				\int_{\abs{z}\geqslant1} \left[ V_{\beta}(x+z) - V_{\beta}(x) \right] \frac{\dd z}{\abs{z}^{d+\alpha}} 
				\leqslant \int_{\abs{z}\geqslant1} \left[ 1+\abs{x+z}^\beta - \abs{x}^{\beta} \right] \frac{\dd z}{\abs{z}^{d+\alpha}} \\
				&\leqslant \int_{\abs{z}\geqslant1} \frac{1+\abs{z}^{\beta}}{\abs{z}^{d+\alpha}} \dd z 
				= \frac{(2\alpha-\beta) s_{d-1 } }{\alpha(\alpha-\beta)}.
			\end{aligned}
		\]
	Thus, an argument similar to that for the case $\alpha \in (1,2)$ implies that for $\alpha \in(0,1]$ and $\beta \in (0,\alpha)$,
		\[
		\begin{aligned}
		&\pheq
		\int_{\bbR^d \backslash \{0\}} \left[ V_{\beta}(x+z) - V_{\beta}(x) - \inprod{ \nabla V_{\beta}(x), z } \mathbbm{1}_{(0,1)}(\abs{z}) \right] \frac{\dd z}{\abs{z}^{d+\alpha}}\\
			&\leqslant \frac{(2\alpha-\beta) s_{d-1 } }{\alpha(\alpha-\beta)} + \frac{\beta(3-\beta)s_{d-1}\sqrt{d}}{2(2-\alpha)} .
		\end{aligned}
		\]
		
		By combining the above inequalities with equation \eqref{eq:ito of stable}, we  obtain that
			\begin{align}\label{ineq:d_ineq of stable}
				\frac{\dd \bbE V_{\beta}(X_{n,t})}{\dd t} 
				&= \bbE[ \inprod{ \nabla V_{\beta}(X_{n,t}), f_n(X_{n,t}) } ] \notag \\
				 + C_{d,\alpha} & \bbE \left[ \int_{\bbR^d \backslash \{0\}} [ V_{\beta}(X_{n,t} + z ) - V_{\beta}(z) - \inprod{ \nabla V_{\beta}(X_{n,t}), z } 
				 \mathbbm{1}_{(0,1)}(\abs{z}) ] \frac{\dd z}{\abs{z}^{d+\alpha}}  \right] \\
				&\leqslant -\beta \bbE V_\beta(X_{n,t}) + c_1\big[ \bbE V_{\beta}(X_{n,t})^{\frac{\beta-1}{\beta}} \mathbbm{1}_{(1,2)}(\alpha) + \mathbbm{1}_{(0,1]}(\alpha) \big] + c_2 \notag  \\ 
				&\leqslant -C_1 \bbE V_\beta(X_{n,t}) + C_2, \notag
			\end{align}
where the first inequality follows from $\abs{x}^{\beta-1} \leqslant V_{\beta}(x)^{(\beta-1)/\beta}$ as $\beta>1$,  the second inequality from H\"older's inequality, 
and $c_1,c_2,C_1,C_2$ are constants independent of $t$ and $n$. Hence, for any $\alpha \in (0,2)$, the differential inequality \eqref{ineq:d_ineq of stable} leads to
		\begin{equation}
			\label{ineq:bounds of V_beta}
			\bbE V_{\beta}(X_{n,t}) \leqslant V_\beta(x_0)\rme^{-C_1 t} + C_2/C_1 \leqslant C ,\quad \forall\ t\geqslant 0.
		\end{equation}
		Due to the definition of $V_{\beta}(x)$, we know that
		\[
			V_{\beta}(X_{n,\tau_n}) \geqslant (n^2 + 1)^{\frac{\beta}{2}},
		\]
		then for any $T>0$, \eqref{ineq:bounds of V_beta} and Markov's inequality lead to
		\[
			(n^2+1)^{\frac\beta2} \mathbb{P}(\tau_n \leqslant T) \leqslant \bbE\left[ V_{\beta}(X_{n,\tau_n}) \mathbbm{1}_{\tau_n \leqslant T} \right] \leqslant C.
		\]
		Let $n\to \infty$, we obtain 
		\[
			 \mathbb{P}(\lim_{n\to \infty} \tau_n \leqslant T) = 0,
		\]
	This and the arbitrariness of $T$ imply
		\begin{equation}
			\label{tau infty stable}
			\lim_{n\to \infty} \tau_n = \infty.
		\end{equation}
		As a result, \eqref{tau infty stable} yields that $\forall x\in \bbR^d,  t \geqslant 0$,
		\[
		X(t,x,\omega) = \lim_{n\to \infty} X_n(t,x,\omega)
		\]
		exists and is continuous with respect to $(t,x)$, which is a solution of SDE \eqref{eq:critical sde}.
		On the other hand, let $X(t)$ and $Y(t)$ be solutions with the same initial value $x$. Define
		\[
		\gamma_n = \inf\{ t: \abs{X(t)} \geqslant n \}, \quad \theta_n = \inf\{ t: \abs{Y(t)} \geqslant n \}.
		\]
		Then, we have
		\[
			X(\gamma_n \land \theta_n \land t) - Y(\gamma_n \land \theta_n \land t) = \int_0^{t} f_n(X(\gamma_n \land \theta_n \land s)) 
			- f_n(Y(\gamma_n \land \theta_n \land s)) \dd s,
		\]
		which implies that
		\[
			\begin{aligned}
				&\pheq
				\bbE \abs{ X(\gamma_n \land \theta_n \land t) - Y(\gamma_n \land \theta_n \land t) } \\
				& \leqslant \lipnorm{f_n} \int_0^t\bbE \abs{ X(\gamma_n \land \theta_n \land s) - Y(\gamma_n \land \theta_n \land s) } \dd s.
			\end{aligned}
		\]
		Then by Gr\"onwall's inequality, we have that
		\[
			\bbE \abs{ X(\gamma_n \land \theta_n \land t) - Y(\gamma_n \land \theta_n \land t)  } = 0, \quad \forall \ t\geqslant 0,
		\]
		which implies that $X(t) = Y(t)$ on $t\leqslant \gamma_n \land \theta_n$. Then, let $n\to \infty$, we have $\gamma_n \land \theta_n 
		\to \infty , a.s.$. Hence, $X(t) = Y(t) , a.s.$ for all $t \geqslant 0$. 
		
		Finally, for the moment estimation, by the same argument for bounding $\bbE V_{\beta}(X_{n,t})$, we can establish that 
		$\bbE V_{\beta}(X_t) \leqslant C$ for all $t>0$, where $C>0$ is a constant not depending on $t$. This implies 
		that $\bbE \abs{X_t}^{\beta} \leqslant \bbE V_{\beta}(X_t) \leqslant C$ for all $\beta \in (0,\alpha)$ and $t>0$. 
	        This completes the proof.
	\end{proof}

	\section*{Acknowledgements}
	Y. Xiao is supported in part by the NSF grant DMS-2153846. L. Xu is supported by the National Natural Science Foundation of 
	China No. 12071499, The Science and Technology Development Fund (FDCT) of Macau S.A.R. FDCT 0074/2023/RIA2, and the
	University of Macau grants MYRG2020-00039-FST, MYRG-GRG2023-00088-FST.

\bibliographystyle{plain}

\begin{thebibliography}{10}

\bibitem{MR2512800}
David Applebaum.
\newblock {\em L\'{e}vy processes and stochastic calculus}, volume 116 of {\em
  Cambridge Studies in Advanced Mathematics}.
\newblock Cambridge University Press, Cambridge, second edition, 2009.

\bibitem{arapostathis2019uniform}
Ari Arapostathis, Hassan Hmedi, Guodong Pang, and Nikola Sandri\'{c}.
\newblock Uniform polynomial rates of convergence for a class of
  {L}\'{e}vy-driven controlled {SDE}s arising in multiclass many-server queues.
\newblock In {\em Modeling, stochastic control, optimization, and
  applications}, volume 164 of {\em IMA Vol. Math. Appl.}, pages 1--20.
  Springer, Cham, 2019.

\bibitem{MR3390235}
Khaled Bahlali, Antoine Hakassou, and Youssef Ouknine.
\newblock A class of stochastic differential equations with super-linear growth
  and non-{L}ipschitz coefficients.
\newblock {\em Stochastics}, 87(5):806--847, 2015.

\bibitem{bao2019convergence}
Jianhai Bao, Xing Huang, and Chenggui Yuan.
\newblock Convergence rate of {E}uler-{M}aruyama scheme for {SDE}s with
  {H}\"{o}lder-{D}ini continuous drifts.
\newblock {\em J. Theoret. Probab.}, 32(2):848--871, 2019.

\bibitem{bao2022coupling}
Jianhai Bao and Jian Wang.
\newblock Coupling approach for exponential ergodicity of stochastic
  {H}amiltonian systems with {L}\'{e}vy noises.
\newblock {\em Stochastic Process. Appl.}, 146:114--142, 2022.

\bibitem{baoyuan2017}
Jianhai Bao, George Yin, and Chenggui Yuan.
\newblock Two-time-scale stochastic partial differential equations driven by
  {$\alpha$}-stable noises: averaging principles.
\newblock {\em Bernoulli}, 23(1):645--669, 2017.

\bibitem{bao2011comparison}
Jianhai Bao and Chenggui Yuan.
\newblock Comparison theorem for stochastic differential delay equations with
  jumps.
\newblock {\em Acta Appl. Math.}, 116(2):119--132, 2011.

\bibitem{bao2012stochastic}
Jianhai Bao and Chenggui Yuan.
\newblock Stochastic population dynamics driven by {L}\'{e}vy noise.
\newblock {\em J. Math. Anal. Appl.}, 391(2):363--375, 2012.

\bibitem{blumenthal1960some}
R.~M. Blumenthal and R.~K. Getoor.
\newblock Some theorems on stable processes.
\newblock {\em Trans. Amer. Math. Soc.}, 95:263--273, 1960.

\bibitem{WOS:000343456100011}
Bj\"{o}rn B\"{o}ttcher, Ren\'{e} Schilling, and Jian Wang.
\newblock {\em L\'{e}vy matters. {III}}, volume 2099 of {\em Lecture Notes in
  Mathematics}.
\newblock Springer, Cham, 2013.
\newblock L\'{e}vy-type processes: construction, approximation and sample path
  properties, With a short biography of Paul L\'{e}vy by Jean Jacod, L\'{e}vy
  Matters.

\bibitem{cerrai2001second}
Sandra Cerrai.
\newblock {\em Second order {PDE}'s in finite and infinite dimension}, volume
  1762 of {\em Lecture Notes in Mathematics}.
\newblock Springer-Verlag, Berlin, 2001.
\newblock A probabilistic approach.

\bibitem{chambers1976method}
J.~M. Chambers, C.~L. Mallows, and B.~W. Stuck.
\newblock A method for simulating stable random variables.
\newblock {\em J. Amer. Statist. Assoc.}, 71(354):340--344, 1976.

\bibitem{WOS:001028737600001}
Peng Chen, Chang-Song Deng, Ren\'{e}~L. Schilling, and Lihu Xu.
\newblock Approximation of the invariant measure of stable {SDE}s by an
  {E}uler-{M}aruyama scheme.
\newblock {\em Stochastic Process. Appl.}, 163:136--167, 2023.

\bibitem{chen2023approximation}
Peng Chen, Xinghu Jin, Yimin Xiao, and Lihu Xu.
\newblock Approximation of the invariant measure for stable {SDE} by the
  {E}uler-{M}aruyama scheme with decreasing step-sizes.
\newblock {\em arXiv preprint arXiv:2310.05390}, 2023.

\bibitem{chen2019multivariate}
Peng Chen, Ivan Nourdin, Lihu Xu, and Xiaochuan Yang.
\newblock Multivariate stable approximation in {W}asserstein distance by
  {S}tein's method.
\newblock {\em arXiv preprint arXiv:1911.12917}, 2019.

\bibitem{chen2016heat}
Zhen-Qing Chen and Xicheng Zhang.
\newblock Heat kernels and analyticity of non-symmetric jump diffusion
  semigroups.
\newblock {\em Probab. Theory Related Fields}, 165(1-2):267--312, 2016.

\bibitem{dang2024eulermaruyama}
Thanh Dang and Lingjiong Zhu.
\newblock {E}uler-{M}aruyama schemes for stochastic differential equations
  driven by stable {L}\'{e}vy processes with i.i.d. stable components.
\newblock {\em arXiv preprint arXiv:2402.12502}, 2024.

\bibitem{deng2023wasserstein}
Changsong Deng, Rene~L Schilling, and Lihu Xu.
\newblock Wasserstein-$1 $ distance between {SDE}s driven by {B}rownian motion
  and stable processes.
\newblock {\em arXiv preprint arXiv:2302.03372}, 2023.

\bibitem{dong2020irreducibility}
Zhao Dong, Feng-Yu Wang, and Lihu Xu.
\newblock Irreducibility and asymptotics of stochastic {B}urgers equation
  driven by {$\alpha$}-stable processes.
\newblock {\em Potential Anal.}, 52(3):371--392, 2020.

\bibitem{MR2030412}
Shizan Fang and Tusheng Zhang.
\newblock A class of stochastic differential equations with non-{L}ipschitzian
  coefficients: pathwise uniqueness and no explosion.
\newblock {\em C. R. Math. Acad. Sci. Paris}, 337(11):737--740, 2003.

\bibitem{hall1981two}
Peter Hall.
\newblock Two-sided bounds on the rate of convergence to a stable law.
\newblock {\em Z. Wahrsch. Verw. Gebiete}, 57(3):349--364, 1981.

\bibitem{higham2002strong}
Desmond~J. Higham, Xuerong Mao, and Andrew~M. Stuart.
\newblock Strong convergence of {E}uler-type methods for nonlinear stochastic
  differential equations.
\newblock {\em SIAM J. Numer. Anal.}, 40(3):1041--1063, 2002.

\bibitem{higham2003exponential}
Desmond~J. Higham, Xuerong Mao, and Andrew~M. Stuart.
\newblock Exponential mean-square stability of numerical solutions to
  stochastic differential equations.
\newblock {\em LMS J. Comput. Math.}, 6:297--313, 2003.

\bibitem{MR2795791}
Martin Hutzenthaler, Arnulf Jentzen, and Peter~E. Kloeden.
\newblock Strong and weak divergence in finite time of {E}uler's method for
  stochastic differential equations with non-globally {L}ipschitz continuous
  coefficients.
\newblock {\em Proc. R. Soc. Lond. Ser. A Math. Phys. Eng. Sci.},
  467(2130):1563--1576, 2011.

\bibitem{hutzenthaler2013divergence}
Martin Hutzenthaler, Arnulf Jentzen, and Peter~E. Kloeden.
\newblock Divergence of the multilevel {M}onte {C}arlo {E}uler method for
  nonlinear stochastic differential equations.
\newblock {\em Ann. Appl. Probab.}, 23(5):1913--1966, 2013.

\bibitem{MR1121940}
Ioannis Karatzas and Steven~E. Shreve.
\newblock {\em Brownian motion and stochastic calculus}, volume 113 of {\em
  Graduate Texts in Mathematics}.
\newblock Springer-Verlag, New York, second edition, 1991.

\bibitem{KIM20142479}
Panki Kim and Renming Song.
\newblock Stable process with singular drift.
\newblock {\em Stochastic Process. Appl.}, 124(7):2479--2516, 2014.

\bibitem{kim2018heat}
Panki Kim, Renming Song, and Zoran Vondra\v{c}ek.
\newblock Heat kernels of non-symmetric jump processes: beyond the stable case.
\newblock {\em Potential Anal.}, 49(1):37--90, 2018.

\bibitem{kloeden1992stochastic}
Peter~E. Kloeden and Eckhard Platen.
\newblock {\em Numerical solution of stochastic differential equations},
  volume~23 of {\em Applications of Mathematics (New York)}.
\newblock Springer-Verlag, Berlin, 1992.

\bibitem{KUHN20192654}
Franziska K\"{u}hn and Ren\'{e}~L. Schilling.
\newblock Strong convergence of the {E}uler-{M}aruyama approximation for a
  class of {L}\'{e}vy-driven {SDE}s.
\newblock {\em Stochastic Process. Appl.}, 129(8):2654--2680, 2019.

\bibitem{liu2020gradient}
Wei Liu, Renming Song, and Longjie Xie.
\newblock Gradient estimates for the fundamental solution of {L}\'{e}vy type
  operator.
\newblock {\em Adv. Nonlinear Anal.}, 9(1):1453--1462, 2020.

\bibitem{MR2972586}
Xuerong Mao and Lukasz Szpruch.
\newblock Strong convergence and stability of implicit numerical methods for
  stochastic differential equations with non-globally {L}ipschitz continuous
  coefficients.
\newblock {\em J. Comput. Appl. Math.}, 238:14--28, 2013.

\bibitem{MR3660882}
Olivier Menoukeu~Pamen and Dai Taguchi.
\newblock Strong rate of convergence for the {E}uler-{M}aruyama approximation
  of {SDE}s with {H}\"{o}lder continuous drift coefficient.
\newblock {\em Stochastic Process. Appl.}, 127(8):2542--2559, 2017.

\bibitem{nolan2020univariate}
John~P. Nolan.
\newblock {\em Univariate stable distributions: models for heavy tailed data}.
\newblock Springer Series in Operations Research and Financial Engineering.
  Springer, Cham, 2020.

\bibitem{MR2001996}
Bernt \O~ksendal.
\newblock {\em Stochastic differential equations}.
\newblock Universitext. Springer-Verlag, Berlin, sixth edition, 2003.
\newblock An introduction with applications.

\bibitem{MR1083357}
Daniel Revuz and Marc Yor.
\newblock {\em Continuous martingales and {B}rownian motion}, volume 293 of
  {\em Grundlehren der mathematischen Wissenschaften [Fundamental Principles of
  Mathematical Sciences]}.
\newblock Springer-Verlag, Berlin, 1991.

\bibitem{ken1999levy}
Ken-iti Sato.
\newblock {\em L\'{e}vy processes and infinitely divisible distributions},
  volume~68 of {\em Cambridge Studies in Advanced Mathematics}.
\newblock Cambridge University Press, Cambridge, revised edition, 2013.
\newblock Translated from the 1990 Japanese original.

\bibitem{MR2160585}
Rong Situ.
\newblock {\em Theory of stochastic differential equations with jumps and
  applications}.
\newblock Mathematical and Analytical Techniques with Applications to
  Engineering. Springer, New York, 2005.
\newblock Mathematical and analytical techniques with applications to
  engineering.

\bibitem{WOS:000465541700003}
Bahram Tarami and Mohsen Avaji.
\newblock Convergence of {E}uler-{M}aruyama method for stochastic differential
  equations driven by {$\alpha$}-stable {L}\'{e}vy motion.
\newblock {\em Journal of Mathematical Extension}, 12(3):33--54, 2018.

\bibitem{wang2012exponential}
Jian Wang.
\newblock On the exponential ergodicity of {L}\'{e}vy-driven
  {O}rnstein-{U}hlenbeck processes.
\newblock {\em J. Appl. Probab.}, 49(4):990--1004, 2012.

\bibitem{yuan2004convergence}
Chenggui Yuan and Xuerong Mao.
\newblock Convergence of the {E}uler-{M}aruyama method for stochastic
  differential equations with {M}arkovian switching.
\newblock {\em Math. Comput. Simulation}, 64(2):223--235, 2004.

\bibitem{yuan2008note}
Chenggui Yuan and Xuerong Mao.
\newblock A note on the rate of convergence of the {E}uler-{M}aruyama method
  for stochastic differential equations.
\newblock {\em Stoch. Anal. Appl.}, 26(2):325--333, 2008.

\end{thebibliography}

\end{CJK}
\end{document}